\newtheorem{theorem}{Theorem}[section]
\newtheorem{lemma}{Lemma}[section]
\newtheorem{definition}{Definition}[section]
\newenvironment{proof}{{\noindent\bf Proof.}}{\hfill $\square$\par}
\begin{document}

\title[An inverse random source problem for the time-space fractional diffusion equation]{An inverse random source problem for the time-space fractional diffusion equation driven by fractional Brownian motion}

\author{Daxin Nie \& Weihua Deng}

\address{School of Mathematics and Statistics, Gansu Key Laboratory of Applied Mathematics and Complex Systems, Lanzhou University, Lanzhou 730000, P.R. China}
\ead{dengwh@lzu.edu.cn}
\vspace{10pt}

\begin{abstract}
We study the inverse random source problem for the time-space fractional diffusion equation driven by fractional Brownian motion with Hurst index $H\in(0,1)$.  
With the aid of a novel estimate, by using the operator approach we propose regularity analyses for the direct problem. Then we provide a reconstruction scheme for the source terms $f$ and $g$ up to the sign. Next, combining the properties of Mittag-Leffler function, the complete uniqueness and instability analyses are provided. It's worth mentioning that all the analyses are unified for $H\in(0,1)$.

\end{abstract}

%
\vspace{2pc}
\noindent{\it Keywords}: Stochastic fractional diffusion equation, inverse random source problem, Riemann-Liouville fractional derivative, reconstruction algorithm
%
%
%
%

\section{Introduction}\label{Sec1}
We consider the stochastic time-space fractional diffusion equation with a random source term
\begin{equation}\label{eqretosol}
	\left \{\eqalign{
		&\partial_{t}u(x,t)+\partial^{1-\alpha}_{t} A^{s}u(x,t)=f(x)h(t)+g(x)\dot{W}^{H}(t)\qquad (x,t)\in D\times(0,T],\\
		&u(x,0)=0\qquad x\in D,\\
		&u(x,t)=0\qquad (x,t)\in \partial D\times(0,T],}\right .
\end{equation}
where $D$ is a bounded domain with Lipschitz boundary;  $A^{s}$ with $s\in(0,1)$ is the fractional Laplacian defined by
\begin{equation*}
	A^{s}u=\sum_{k=1}^{\infty}\lambda_{k}^{s}(u,\phi_{k})\phi_{k},
\end{equation*}
in which $\{\lambda_{k},\phi_{k}\}_{k=1}^{\infty}$ are the  nondecreasing eigenvalues and $L^{2}$-norm normalized eigenfunctions of $A=(-\Delta)$ with a zero Dirichlet boundary condition; $f(x),g(x)\in L^{2}(D)$ are deterministic terms with $\|g(x)\|_{L^{2}(D)}\neq0$; $h\in L^{\infty}([0,T])$ and there exists a positve constant $C_{h}$ satisfying $h(t)\geq C_{h}>0$ with $t\in(0,T)$; $\partial^{1-\alpha}_{t}$ with $\alpha\in(0,1)$ means Riemann-Liouville fractional derivative defined by \cite{Podlubny.1999Fde}
\begin{equation*}
	\partial^{1-\alpha}_{t}u=\frac{1}{\Gamma(\alpha)}\frac{\partial}{\partial t}\int_{0}^{t}(t-r)^{\alpha-1}u(r)dr;
\end{equation*}
$W^{H}(t)$ with Hurst index $H\in(0,1)$ is the fractional Brownian motion (fBm) on a complete probability space $(\Omega,\mathbb{F},\mathbb{P})$, in which $\Omega$ is a sample space, $\mathbb{F}$ is a $\sigma$-algebra on $\Omega$, and $\mathbb{P}$ is a probability
measure on the measurable space $(\Omega,\mathbb{F})$.

It is well known that the fractional diffusion equation works well in describing the anomalous diffusion phenomena \cite{Barkai.2001FFPesaa,Metzler.1999ADaRCtTEAFFPEA,Metzler.1998AtiefCtrwafdee}. While the system (\ref{eqretosol}) without source terms governs the probability density function of the subordinate killed Brownian motion \cite{Liu.2021Deng}. More often, the practical physical system has deterministic and/or stochastic sources. The source terms for the system (\ref{eqretosol}) are the deterministic one and the fractional Gaussian noise. In fact, considering the non-ignorable random disturbance, the stochastic fractional partial differential  equations have already been widely concerned by scholars mathematically and numerically \cite{Gunzburger.2018ScrotdfstfPstastwn,Gunzburger.2019Cofesospidedbwn,Sakamoto.2011Ivbvpffdweaatsip,Wu.2020AaotLsfsspdbistwn,Yan.2019OeeffspdewfBm}.



The inverse source problem is an important topic for anomalous dynamics, involving the inverse source problem in time, space, and time-space fractional diffusion equations and so on \cite{Babaei.2019Runbciatfirdcp,Huang.2019Cefttfadeaa,Jin.2021Aippfssar,Kaltenbacher.2019Oaippfafrde,Li.2019Aipitfdewnbc,Qasemi.2019TtfdipstaembaldGm,Thach.2019Ioaispftfdewrn,Tuan.2019Aipfaitfdearmaee,Yan.2019Isdspfatfdebaapa,Zhang.2017Rtptiafde}. As for the inverse random source problems of the fractional diffusion equations, the corresponding discussions seem to be few  \cite{Feng.2020AirspfttfdedbafBm,Liu.2020Rottdstiasfde,Niu.2020Airspiasfde}. In \cite{Niu.2020Airspiasfde}, by the properties and the It\^{o} isometry of Brownian motions, the authors propose the regularity of the fractional diffusion equations driven by Brownian motions and discuss the reconstruction of source terms and its instability. In \cite{Feng.2020AirspfttfdedbafBm}, the authors consider the inverse random source problem of fractional diffusion equation with fBms, to be specific, by transforming the Wiener integral
with respect to fBm into the one of Brownian motion, the authors provide the well-posedness of the direct problem and perform the instability analyses of the inverse problem for $H\in(0,\frac{1}{2})$ and $H\in(\frac{1}{2},1)$, separately.

In this paper,  we first provide a unified discussion on the regularity of direct problem for all $H\in(0,1)$. Then we propose a reconstruction scheme for $f$ and $|g|$ and discuss the uniqueness. By extending the estimates of the stochastic integral with respect to fBms with $H\in(0,1)$ in \cite{Nie.2021428AucaftfdedbfGnwHiHi01}, a unified instability analysis for $H\in(0,1)$ is proposed. Different from the instability discussions in \cite{Feng.2020AirspfttfdedbafBm}, we not only show the instability of recovering $g_{k}^{2}$, but also for $g_{k}g_{l}$.

The rest of the paper is organized as follows. In Section \ref{Sec2}, we provide some facts on fBms and Mittag-Leffler functions. The well-posedness of the direct problem is discussed in Section \ref{Sec3}. Then, we propose the reconstructions of $f$ and $|g|$ and the uniqueness and instability of the reconstructions are established. In Section \ref{Sec4}, we perform some numerical experiments to validate the theoretical results. At last, we conclude the paper with some discussions.

\section{Preliminaries}\label{Sec2}
In this section, we provide some useful lemmas and present the solution of Eq. \eref{eqretosol}. In the following, denote $\mathbb{E}$ as the expectation and ${\rm Cov}(X,Y)=\mathbb{E}[(X-\mathbb{E}(X))(Y-\mathbb{E}(Y))]$ for variables $X$ and $Y$, $\epsilon>0$ is arbitrarily small, and $C$ is a positive constant, whose value may differ at different places.

Let us first recall the definitions of Riemann-Liouville fractional integrals.
\begin{definition}[\cite{Podlubny.1999Fde}]
	The left- and right-sided Riemann-Liouville fractional integrals of order $\alpha$ $(\alpha>0)$ are defined by
	\begin{equation*}
		\eqalign{
			{}_{a}\partial^{-\alpha}_{x}u=\frac{1}{\Gamma(\alpha)}\int_{a}^{x}(x-\xi)^{\alpha-1}u(\xi)d\xi,\\
			{}_{x}\partial^{-\alpha}_{b}u=\frac{1}{\Gamma(\alpha)}\int_{x}^{b}(\xi-x)^{\alpha-1}u(\xi)d\xi
		}
	\end{equation*}
	with $a,b\in\mathbb{R}$.
\end{definition}
And they have the following properties.
\begin{lemma}\label{lemIIiso}
	For $u,v\in L^{2}(\mathbb{R})$, $\nu\in(0,\frac{1}{2})$, and ${\bf supp}~ u,~{\bf supp}~ v\subset [a,b]$ with $a,b\in \mathbb{R}$, we have
	\begin{equation*} \fl
		\int_{a}^{b}{}_{a}\partial^{-\nu}_{x}u~_{x}\partial^{-\nu}_{b}vdx+\int_{a}^{b}{}_{a}\partial^{-\nu}_{x}v~_{x}\partial^{-\nu}_{b}udx=\frac{1}{\Gamma(2\nu)}\int_{a}^{b}\int_{a}^{b} u(\xi)v(\eta)|\eta-\xi|^{2\nu-1}d\eta  d\xi
	\end{equation*}
	and
	\begin{equation*}
		\eqalign{
			\int_{a}^{b}\int_{a}^{b} u(\xi)v(\eta)|\eta-\xi|^{2\nu-1}d\eta  d\xi\leq C\|{}_{a}\partial^{-\nu}_{x}u\|_{L^{2}([a,b])}\|{}_{a}\partial^{-\nu}_{x}v\|_{L^{2}([a,b])},\\
			\int_{a}^{b}\int_{a}^{b} u(\xi)v(\eta)|\eta-\xi|^{2\nu-1}d\eta  d\xi\leq C\|{}_{x}\partial^{-\nu}_{b}u\|_{L^{2}([a,b])}\|{}_{x}\partial^{-\nu}_{b}v\|_{L^{2}([a,b])}.
		}
	\end{equation*}
	
\end{lemma}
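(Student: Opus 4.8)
The plan is to obtain the equality by a Fubini computation producing a Beta integral, and the two inequalities by reducing the double integral to an $L^{2}$-pairing of Riesz potentials and then upgrading full-line norms to interval norms.

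First, the identity. Substituting the definitions of ${}_{a}\partial^{-\nu}_{x}$ and ${}_{x}\partial^{-\nu}_{b}$, the product $\int_{a}^{b}{}_{a}\partial^{-\nu}_{x}u\cdot{}_{x}\partial^{-\nu}_{b}v\,dx$ becomes $\Gamma(\nu)^{-2}$ times the integral of $(x-\xi)^{\nu-1}(\eta-x)^{\nu-1}u(\xi)v(\eta)$ over the simplex $\{a\le\xi\le x\le\eta\le b\}$. Since $\nu\in(0,\frac{1}{2})$ the kernels $(x-\xi)^{\nu-1}$, $(\eta-x)^{\nu-1}$ are locally integrable, so together with $u,v\in L^{2}$ of compact support the integrand is absolutely integrable and Fubini applies; doing the $x$-integration first and substituting $x=\xi+(\eta-\xi)t$ turns $\int_{\xi}^{\eta}(x-\xi)^{\nu-1}(\eta-x)^{\nu-1}dx$ into $(\eta-\xi)^{2\nu-1}B(\nu,\nu)=(\eta-\xi)^{2\nu-1}\Gamma(\nu)^{2}/\Gamma(2\nu)$. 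This gives $\int_{a}^{b}{}_{a}\partial^{-\nu}_{x}u\cdot{}_{x}\partial^{-\nu}_{b}v\,dx=\frac{1}{\Gamma(2\nu)}\int_{a}^{b}\int_{\xi}^{b}u(\xi)v(\eta)(\eta-\xi)^{2\nu-1}\,d\eta\,d\xi$. Adding the analogous formula with $u$ and $v$ interchanged and relabelling $\xi\leftrightarrow\eta$ in the added term produces the integral over the complementary region $\{\eta\le\xi\}$, and the two pieces combine to $\frac{1}{\Gamma(2\nu)}\int_{a}^{b}\int_{a}^{b}u(\xi)v(\eta)|\eta-\xi|^{2\nu-1}\,d\eta\,d\xi$, the claimed equality.

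Next, the inequalities. Extend $u,v$ by zero to $\mathbb{R}$ and use the composition identity for Riesz kernels, $|\eta-\xi|^{2\nu-1}=\frac{\Gamma(2\nu)}{\Gamma(\nu)^{2}}\int_{\mathbb{R}}|\eta-y|^{\nu-1}|y-\xi|^{\nu-1}\,dy$, whose right-hand side converges precisely because $\nu<\frac{1}{2}$. Inserting it and using Fubini rewrites the double integral as $\frac{\Gamma(2\nu)}{\Gamma(\nu)^{2}}\int_{\mathbb{R}}(\Phi u)(y)(\Phi v)(y)\,dy$, where $\Phi w(y):=\int_{\mathbb{R}}|y-\xi|^{\nu-1}w(\xi)\,d\xi$. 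The Fourier transform of $\Phi w$ is $c_{\nu}|\omega|^{-\nu}\widehat{w}(\omega)$ with $c_{\nu}=2\Gamma(\nu)\cos(\nu\pi/2)>0$, so by Plancherel $\|\Phi w\|_{L^{2}(\mathbb{R})}=c_{\nu}\,\| \, |\omega|^{-\nu}\widehat{w}\,\|_{L^{2}}$; moreover, for $w$ supported in $[a,b]$ the one-sided integrals ${}_{a}\partial^{-\nu}_{x}w$, ${}_{x}\partial^{-\nu}_{b}w$ coincide with the corresponding $\pm\infty$ integrals, whose Fourier symbols have modulus $|\omega|^{-\nu}$, hence $\|{}_{a}\partial^{-\nu}_{x}w\|_{L^{2}(\mathbb{R})}=\|{}_{x}\partial^{-\nu}_{b}w\|_{L^{2}(\mathbb{R})}=\| \, |\omega|^{-\nu}\widehat{w}\,\|_{L^{2}}$. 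Cauchy--Schwarz then bounds the double integral by $C\|{}_{a}\partial^{-\nu}_{x}u\|_{L^{2}(\mathbb{R})}\|{}_{a}\partial^{-\nu}_{x}v\|_{L^{2}(\mathbb{R})}$, and equally by $C\|{}_{x}\partial^{-\nu}_{b}u\|_{L^{2}(\mathbb{R})}\|{}_{x}\partial^{-\nu}_{b}v\|_{L^{2}(\mathbb{R})}$ --- the two asserted estimates, but with $L^{2}(\mathbb{R})$-norms in place of $L^{2}([a,b])$-norms.

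The main obstacle is therefore the equivalence $\|{}_{a}\partial^{-\nu}_{x}w\|_{L^{2}(\mathbb{R})}\le C\|{}_{a}\partial^{-\nu}_{x}w\|_{L^{2}([a,b])}$ (and its mirror for ${}_{x}\partial^{-\nu}_{b}$), i.e.\ bounding the tail of $W:={}_{a}\partial^{-\nu}_{x}w$ on $(b,\infty)$ by $\|W\|_{L^{2}([a,b])}$ (note $W$ vanishes on $(-\infty,a)$, so this is all that is missing). I would use the closed form of the fractional integral outside the support: since $w={}_{a}\partial^{\nu}_{x}W$ has support in $[a,b]$, an integration by parts gives, for $x>b$, $W(x)=\frac{\sin\nu\pi}{\pi}(x-b)^{\nu}\int_{a}^{b}\frac{(b-r)^{-\nu}}{x-r}\,W(r)\,dr$. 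In the variables $X=x-b$, $R=b-r$ this is the integral operator with kernel $X^{\nu}R^{-\nu}/(X+R)$, homogeneous of degree $-1$, for which the Schur test with weight $X^{-1/2}$ yields the $L^{2}(0,\infty)$-bound $\pi/\cos(\nu\pi)$ --- finite exactly because $\nu<\frac{1}{2}$; hence $\|W\|_{L^{2}((b,\infty))}\le\tan(\nu\pi)\|W\|_{L^{2}([a,b])}$, and the equivalence follows with constant $\sec(\nu\pi)$, the reflection $x\mapsto a+b-x$ handling ${}_{x}\partial^{-\nu}_{b}$. Feeding this into the previous step closes both inequalities. (This norm equivalence is also standard in the variational theory of one-sided fractional operators, so one could cite it rather than reprove it.)
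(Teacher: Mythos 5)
Your proof is correct, and while the first half coincides with the paper's, the second half takes a genuinely different route at the decisive step. The identity is proved exactly as in the paper: Fubini plus the Beta integral $\int_{\xi}^{\eta}(x-\xi)^{\nu-1}(\eta-x)^{\nu-1}dx=B(\nu,\nu)(\eta-\xi)^{2\nu-1}$, then symmetrization. For the inequalities, both arguments are Fourier-based, but the paper applies Parseval directly to the cross term $\int_{a}^{b}{}_{a}\partial^{-\nu}_{x}u\,{}_{x}\partial^{-\nu}_{b}v\,dx$, uses Cauchy--Schwarz in frequency, and then \emph{cites} an earlier work for the key localization $\int_{a}^{b}{}_{a}\partial^{-\nu}_{x}u\,{}_{x}\partial^{-\nu}_{b}u\,dx\le C\|{}_{a}\partial^{-\nu}_{x}u\|_{L^{2}([a,b])}^{2}$; you instead rewrite the double integral via the Riesz-kernel composition, pass by Plancherel and Cauchy--Schwarz to the full-line norms $\|{}_{a}\partial^{-\nu}_{x}u\|_{L^{2}(\mathbb{R})}$, and then \emph{prove} the missing localization yourself: the continuation formula $W(x)=\frac{\sin\nu\pi}{\pi}(x-b)^{\nu}\int_{a}^{b}\frac{(b-r)^{-\nu}}{x-r}W(r)\,dr$ for $x>b$ is indeed correct (it follows from the integration by parts you indicate and is classical, cf. Samko--Kilbas--Marichev), and the Schur test for the degree $-1$ homogeneous kernel gives $\|W\|_{L^{2}((b,\infty))}\le\tan(\nu\pi)\|W\|_{L^{2}([a,b])}$, finite precisely because $\nu<\tfrac12$. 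What your route buys is self-containedness and an explicit constant $\sec(\nu\pi)$ that quantifies how the estimate degenerates as $\nu\to\tfrac12$, where the paper simply leans on the cited bilinear-form bound; what the paper's route buys is brevity, and note that your Riesz-composition detour could be skipped by applying Parseval to the cross terms furnished by the identity, as the paper does. One cosmetic slip: the constant in your composition identity is not $\Gamma(2\nu)/\Gamma(\nu)^{2}$; in fact $\int_{\mathbb{R}}|\eta-y|^{\nu-1}|y-\xi|^{\nu-1}dy=\left(B(\nu,\nu)+2B(\nu,1-2\nu)\right)|\eta-\xi|^{2\nu-1}$, equivalently $c_{\nu}^{2}/c_{2\nu}$ in your notation, but since only a generic positive constant is needed this does not affect the argument.
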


\begin{proof}
	For ${\bf supp}~u,~{\bf supp}~ v\subset (a,b)$, simple calculations give
	\begin{equation*}
		\eqalign{
			&\int_{a}^{b}{}_{a}\partial^{-\nu}_{x}u~_{x}\partial^{-\nu}_{b}vdx\\
			=&\frac{1}{(\Gamma(\nu))^{2}}\int_{a}^{b}\int_{a}^{x}(x-\xi)^{\nu-1}u(\xi)d\xi \int_{x}^{b}(\eta-x)^{\nu-1}v(\eta)d\eta dx\\
			=&\frac{1}{(\Gamma(\nu))^{2}}\int_{a}^{b}\int_{a}^{x}\int_{x}^{b}(x-\xi)^{\nu-1} (\eta-x)^{\nu-1}u(\xi)v(\eta)d\eta d\xi  dx\\
			=&\frac{1}{(\Gamma(\nu))^{2}}\int_{a}^{b}\int_{\xi}^{b}\int_{x}^{b}(x-\xi)^{\nu-1} (\eta-x)^{\nu-1}u(\xi)v(\eta)d\eta dx d\xi  \\
			=&\frac{1}{(\Gamma(\nu))^{2}}\int_{a}^{b}\int_{\xi}^{b}\int_{\xi}^{\eta}(x-\xi)^{\nu-1} (\eta-x)^{\nu-1}dx u(\xi)v(\eta)d\eta  d\xi  \\
			=&\frac{1}{\Gamma(2\nu)}\int_{a}^{b}\int_{\xi}^{b} u(\xi)v(\eta)(\eta-\xi)^{2\nu-1}d\eta  d\xi.  \\
		}
	\end{equation*}
	Similarly, one can get
	\begin{equation*}
		\eqalign{
			\int_{a}^{b}{}_{a}\partial^{-\nu}_{x}v~_{x}\partial^{-\nu}_{b}udx&=\frac{1}{\Gamma(2\nu)}\int_{a}^{b}\int_{\xi}^{b} v(\xi)u(\eta)(\eta-\xi)^{2\nu-1}d\eta  d\xi\\
			&=\frac{1}{\Gamma(2\nu)}\int_{a}^{b}\int_{a}^{\eta} v(\xi)u(\eta)(\eta-\xi)^{2\nu-1}  d\xi d\eta.
		}
	\end{equation*}
	Thus
	\begin{equation*} \fl
		\int_{a}^{b}{}_{a}\partial^{-\nu}_{x}u~_{x}\partial^{-\nu}_{b}vdx+\int_{a}^{b}{}_{a}\partial^{-\nu}_{x}v~_{x}\partial^{-\nu}_{b}udx=\frac{1}{\Gamma(2\nu)}\int_{a}^{b}\int_{a}^{b} u(\xi)v(\eta)|\eta-\xi|^{2\nu-1}d\eta  d\xi.
	\end{equation*}
	For ${\rm\bf supp}~u,{\rm\bf supp}~v\subset [a,b]$, and $\nu\in(0,\frac{1}{2})$, Parseval's equality leads to
	\begin{equation}\label{eqIIiso}
		\eqalign{
			&\int_{a}^{b}{}_{a}\partial^{-\nu}_{x}u~{}_{x}\partial^{-\nu}_{b}vdx\\
			=&\int_{\mathbb{R}}{}_{-\infty}\partial^{-\nu}_{x}u~{}_{x}\partial^{-\nu}_{\infty}vdx\\
			=&\cos(\nu\pi)\int_{\mathbb{R}}|\omega|^{-2\nu}\mathcal{F}(u)(\omega)\mathcal{F}(v)(\omega)d\omega\\
			\leq&C\cos(\nu\pi)\left(\int_{\mathbb{R}}|\omega|^{-2\nu}(\mathcal{F}(u)(\omega))^{2}d\omega\right)^{\frac{1}{2}}\left(\int_{\mathbb{R}}|\omega|^{-2\nu}(\mathcal{F}(v)(\omega))^{2}d\omega\right)^{\frac{1}{2}}\\
			\leq&C\left(\int_{a}^{b}{}_{a}\partial^{-\nu}_{x}u~{}_{x}\partial^{-\nu}_{b}udx\right)^{\frac{1}{2}}\left(\int_{a}^{b}{}_{a}\partial^{-\nu}_{x}v~{}_{x}\partial^{-\nu}_{b}vdx\right)^{\frac{1}{2}},
		}
	\end{equation}
	where $\mathcal{F}(u)$ means the Fourier transform of $u$. According to \cite{Nie.2021428AucaftfdedbfGnwHiHi01}, we have for ${\bf supp}~u\subset [a,b]$ and $\nu\in(0,\frac{1}{2})$
	\begin{equation*}
		\eqalign{
			\int_{a}^{b}{}_{a}\partial^{-\nu}_{x}u~{}_{x}\partial^{-\nu}_{b}udx\leq& C\|{}_{a}\partial^{-\nu}_{x}u\|_{L^{2}([a,b])}^{2}.
		}
	\end{equation*}
	Combining \eref{eqIIiso}, we get the first desired result and the second one follows by similar arguments.
\end{proof}

Then, we provide some lemmas about one-dimensional fBm.
\begin{lemma}[\cite{Bardina.2006MfiwHplt12,Cao.2017ASEEwAWaRN,Cao.2018FeafsosdedbfBm}]\label{Lemitoeql05}
	For $H\in(0,1/2)$ and $q_{1}(t),q_{2}(t)\in H^{\frac{1-2H}{2}}_{0}([0,T])$ $($$H^{\frac{1-2H}{2}}_{0}([0,T])=\left \{v\in H^{s}(\mathbb{R}),\mathbf{ supp}~v\in [0,T]\right \}$$)$, we have
	\begin{equation}\label{equisol05}
		 \fl\eqalign{
			&\mathbb{E} \left [\int_{0}^{T} q_{1}(r) d W^{H}(r) \int_{0}^{T} q_{2}(r) d W^{H}(r)\right ]\\
			&\qquad\qquad\qquad=\frac{1}{2}H(1-2H) \int_{\mathbb{R}} \int_{\mathbb{R}} \frac{(q_{1}(r_{1})-q_{1}(r_{2}))(q_{2}(r_{1})-q_{2}(r_{2}))}{|r_{1}-r_{2}|^{2 -2H}} d r_{1} d r_{2},
		}
	\end{equation}
	where $W^H$ means one-dimensional fBm.
\end{lemma}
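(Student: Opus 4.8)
The plan is to establish \eref{equisol05} first for step functions, using only the covariance structure of fBm, and then to extend it to all of $H^{\frac{1-2H}{2}}_{0}([0,T])$ by a density argument. Recall that one-dimensional fBm satisfies $\mathbb{E}[W^{H}(t)W^{H}(s)]=\frac{1}{2}(t^{2H}+s^{2H}-|t-s|^{2H})$ for $s,t\geq0$, and that for a step function $q=\sum_{j}c_{j}\mathbf{1}_{(t_{j-1},t_{j}]}$ the Wiener integral is $\int_{0}^{T}q\,dW^{H}=\sum_{j}c_{j}(W^{H}(t_{j})-W^{H}(t_{j-1}))$, which is extended by $L^{2}(\Omega)$-isometry. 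Thus for step functions the left-hand side of \eref{equisol05} is a bilinear combination of increment covariances, and since $\mathbf{1}_{(a,b]}=\mathbf{1}_{[0,b]}-\mathbf{1}_{[0,a]}$, matching it with the right-hand side reduces by bilinearity to the single scalar identity
\begin{equation*}
\frac{1}{2}H(1-2H)\int_{\mathbb{R}}\int_{\mathbb{R}}\frac{(\mathbf{1}_{[0,t]}(r_{1})-\mathbf{1}_{[0,t]}(r_{2}))(\mathbf{1}_{[0,s]}(r_{1})-\mathbf{1}_{[0,s]}(r_{2}))}{|r_{1}-r_{2}|^{2-2H}}dr_{1}dr_{2}=\tfrac{1}{2}(t^{2H}+s^{2H}-|t-s|^{2H}).
\end{equation*}

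The core computation is to verify this last identity directly. The integrand is supported on the region where exactly one of $r_{1},r_{2}$ lies in the relevant interval, so the diagonal singularity $|r_{1}-r_{2}|^{2H-2}$ (note $2-2H\in(1,2)$) is present only near the endpoints and is rendered integrable precisely by the antisymmetric difference structure; carrying out the inner integration of a power and collecting primitives yields exactly the covariance, once the normalization $\frac{1}{2}H(1-2H)$ is fixed. A convenient cross-check, which also prepares the density step, is the Fourier route: with $\langle\cdot,\cdot\rangle_{*}$ denoting the bilinear form on the right of \eref{equisol05}, the substitution $r_{1}=r+h$, Plancherel's theorem, and the elementary identity $\int_{\mathbb{R}}\frac{2-2\cos(\omega h)}{|h|^{2-2H}}dh=c_{H}|\omega|^{1-2H}$ give $\langle q_{1},q_{2}\rangle_{*}=c_{H}\int_{\mathbb{R}}|\omega|^{1-2H}\mathcal{F}(q_{1})(\omega)\overline{\mathcal{F}(q_{2})(\omega)}\,d\omega$ (in the spirit of the Fourier computation already used in \eref{eqIIiso}); hence $\|\cdot\|_{*}:=\langle\cdot,\cdot\rangle_{*}^{1/2}$ is, on functions supported in $[0,T]$, equivalent to the $H^{\frac{1-2H}{2}}$-seminorm, which in turn is an equivalent norm on $H^{\frac{1-2H}{2}}_{0}([0,T])$ since $\frac{1-2H}{2}<\frac{1}{2}$.

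For the passage to the limit, step functions supported in $[0,T]$ are dense in $H^{\frac{1-2H}{2}}_{0}([0,T])$ (again because $\frac{1-2H}{2}<\frac{1}{2}$, so no boundary traces intervene and indicator functions themselves belong to this space). Given $q_{1},q_{2}\in H^{\frac{1-2H}{2}}_{0}([0,T])$, choose step functions $q_{i}^{n}\to q_{i}$ in that norm; by the norm equivalence this is convergence in $\|\cdot\|_{*}$, while by the Wiener-integral isometry together with the step-function case it is convergence of $\int_{0}^{T}q_{i}^{n}\,dW^{H}$ in $L^{2}(\Omega)$. Passing to the limit in the identity \eref{equisol05}, valid for step functions, and using polarization of both quadratic forms yields the general statement. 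The main obstacle is the core scalar computation of the hypersingular double integral over indicators: one must handle the endpoint singularities with care, exploiting the cancellation in the difference quotient to secure absolute convergence and to pin down the exact constant; once this (or equivalently the Fourier identification of $\langle\cdot,\cdot\rangle_{*}$) is in hand, the density and continuity arguments are routine.
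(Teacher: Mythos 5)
The paper offers no proof of Lemma \ref{Lemitoeql05} at all: it is imported verbatim from the cited references, so there is no in-paper argument to compare against. Your proof sketch is essentially the standard derivation found in that literature, and it is correct. The core scalar identity checks out: for $q=\mathbf{1}_{[0,t]}$ the double integral is supported where exactly one variable lies in $[0,t]$, the inner integral gives $\frac{r^{2H-1}+(t-r)^{2H-1}}{1-2H}$, and integrating in $r$ and multiplying by the normalization $\frac{1}{2}H(1-2H)$ yields exactly $t^{2H}$; translation invariance plus polarization (using $\mathbf{1}_{[0,t]}-\mathbf{1}_{[0,s]}=\pm\mathbf{1}_{(s\wedge t,\,s\vee t]}$) then gives $\frac{1}{2}\left(t^{2H}+s^{2H}-|t-s|^{2H}\right)$, matching the fBm covariance, and bilinearity settles the step-function case. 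The Fourier identification of the bilinear form with $c_{H}\int_{\mathbb{R}}|\omega|^{1-2H}\mathcal{F}(q_{1})\overline{\mathcal{F}(q_{2})}\,d\omega$ is the Gagliardo-seminorm computation with exponent $\frac{1-2H}{2}\in(0,\frac{1}{2})$, and it correctly gives equivalence with the $H^{\frac{1-2H}{2}}$-norm on functions supported in $[0,T]$ (the same equivalence the paper itself invokes later, via \cite{Nie.2021428AucaftfdedbfGnwHiHi01}, in the proof of Lemma \ref{eqcorleml}); density of step functions in $H^{\frac{1-2H}{2}}_{0}([0,T])$ holds precisely because the smoothness index is below $\frac{1}{2}$. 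The only point you should make explicit is that your limit passage identifies $\int_{0}^{T}q\,dW^{H}$ for general $q\in H^{\frac{1-2H}{2}}_{0}([0,T])$ with the $L^{2}(\Omega)$-limit of the integrals of the approximating step functions; for $H<\frac{1}{2}$ this is exactly how the Wiener integral is defined (isometric extension from elementary functions, as in Pipiras--Taqqu and the cited works), so the identification is legitimate, but as written it is assumed rather than stated. With that sentence added, your argument is a complete, self-contained proof of the quoted lemma, which the paper itself does not supply.
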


\begin{lemma}[\cite{Kloeden.1992Nsosde,Mishura.2008ScffBmarp}]\label{Lemitoeqge05}
	For $H\in(1/2,1)$ and $g_{1}(t),g_{2}(t)\in L^2([0,T])$, there holds
	\begin{equation}\label{equisog05}
		\eqalign{
			&\mathbb{E} \left [\int_{0}^{T} q_{1}(r) d W^{H}(r) \int_{0}^{T} q_{2}(r) d W^{H}(r)\right ]\\
			&\qquad\qquad\qquad=H(2 H-1) \int_{0}^{T} \int_{0}^{T} q_{1}(r_{1}) q_{2}(r_{2})|r_{1}-r_{2}|^{2 H-2} d r_{1} d r_{2},
		}
	\end{equation}
	where $W^H$ is one-dimensional fBm.
\end{lemma}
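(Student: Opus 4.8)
The plan is to prove \eref{equisog05} first for indicator functions, where it reduces to the classical covariance structure of fractional Brownian motion, then extend it by bilinearity to step functions, and finally by a density argument to all of $L^{2}([0,T])$.

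First I would recall that the one-dimensional fBm satisfies $\mathbb{E}[W^{H}(t)W^{H}(s)]=\frac{1}{2}(t^{2H}+s^{2H}-|t-s|^{2H})$ and that for a step function $q=\sum_{i}a_{i}\mathbf{1}_{(t_{i},t_{i+1}]}$ the Wiener integral is $\int_{0}^{T}q\,dW^{H}=\sum_{i}a_{i}\big(W^{H}(t_{i+1})-W^{H}(t_{i})\big)$. Choosing $q_{1}=\mathbf{1}_{(0,t]}$ and $q_{2}=\mathbf{1}_{(0,s]}$, the left-hand side of \eref{equisog05} is $\mathbb{E}[W^{H}(t)W^{H}(s)]$ and the right-hand side is $H(2H-1)\int_{0}^{t}\int_{0}^{s}|r_{1}-r_{2}|^{2H-2}\,dr_{1}\,dr_{2}$, which is finite because $2H-2>-1$. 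These two expressions agree: writing $C(r_{1},r_{2})=\frac{1}{2}(r_{1}^{2H}+r_{2}^{2H}-|r_{1}-r_{2}|^{2H})$, a direct differentiation gives $\partial_{r_{1}}\partial_{r_{2}}C(r_{1},r_{2})=H(2H-1)|r_{1}-r_{2}|^{2H-2}$, and since $C(r_{1},0)=C(0,r_{2})=0$ the fundamental theorem of calculus yields $H(2H-1)\int_{0}^{t}\int_{0}^{s}|r_{1}-r_{2}|^{2H-2}\,dr_{1}\,dr_{2}=C(t,s)$. Bilinearity in $q_{1}$ and $q_{2}$ then extends \eref{equisog05} to arbitrary step functions.

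It remains to pass to general $q_{1},q_{2}\in L^{2}([0,T])$. Introduce the bilinear form $\langle q_{1},q_{2}\rangle_{\mathcal{H}}:=H(2H-1)\int_{0}^{T}\int_{0}^{T}q_{1}(r_{1})q_{2}(r_{2})|r_{1}-r_{2}|^{2H-2}\,dr_{1}\,dr_{2}$; what has been proved so far says precisely that $q\mapsto\int_{0}^{T}q\,dW^{H}$ is an isometry from the space of step functions equipped with $\|\cdot\|_{\mathcal{H}}$ into $L^{2}(\Omega)$. Since the singularity exponent satisfies $0<2-2H<1$, the one-dimensional Hardy--Littlewood--Sobolev inequality gives $|\langle q_{1},q_{2}\rangle_{\mathcal{H}}|\le C\|q_{1}\|_{L^{1/H}([0,T])}\|q_{2}\|_{L^{1/H}([0,T])}$, and because $1/H<2$ and $[0,T]$ is bounded, H\"older's inequality yields $\|\cdot\|_{L^{1/H}([0,T])}\le C\|\cdot\|_{L^{2}([0,T])}$, so that $\|q\|_{\mathcal{H}}\le C\|q\|_{L^{2}([0,T])}$. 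As step functions are dense in $L^{2}([0,T])$, the Wiener integral extends continuously to $L^{2}([0,T])$, and both sides of \eref{equisog05} are continuous bilinear forms on $L^{2}([0,T])\times L^{2}([0,T])$; passing to the limit along step-function approximations gives the identity for all $q_{1},q_{2}\in L^{2}([0,T])$.

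The main obstacle is this last continuity/density step: one must control the quadratic form $\|\cdot\|_{\mathcal{H}}$ by the $L^{2}$-norm so that the Wiener integral is well defined on $L^{2}([0,T])$ and the limiting procedure is legitimate. This is exactly where the hypothesis $H>1/2$ enters (through $2-2H<1$ and $1/H<2$), and it is also the reason why the complementary regime $H<1/2$ in Lemma \ref{Lemitoeql05} is forced to use the Sobolev-type space $H_{0}^{(1-2H)/2}([0,T])$ rather than $L^{2}([0,T])$. By contrast, the identification of the kernel integral with the fBm covariance on indicators is elementary, requiring only some care with the absolute value $|t-s|^{2H}$ when differentiating.
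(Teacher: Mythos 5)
Your proposal is correct, but note that the paper itself does not prove this statement at all: Lemma \ref{Lemitoeqge05} is imported by citation from the references (Kloeden--Platen, Mishura), so there is no in-paper argument to compare against. What you have written is essentially the standard construction of the fBm Wiener integral for $H>1/2$ as it appears in those sources: verification of \eref{equisog05} on indicators via $\partial_{r_{1}}\partial_{r_{2}}C(r_{1},r_{2})=H(2H-1)|r_{1}-r_{2}|^{2H-2}$, bilinear extension to step functions, and then the Hardy--Littlewood--Sobolev bound $\|q\|_{\mathcal{H}}\le C\|q\|_{L^{1/H}}\le C\|q\|_{L^{2}([0,T])}$ (valid precisely because $2-2H<1$ and $[0,T]$ is bounded) to pass to general $q_{1},q_{2}\in L^{2}([0,T])$ by density. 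Two minor points worth making explicit: the fundamental-theorem-of-calculus step across the diagonal is legitimate because $r_{1}\mapsto H\,\mathrm{sgn}(r_{1}-r_{2})|r_{1}-r_{2}|^{2H-1}$ is absolutely continuous (as $2H-1>0$) with locally integrable derivative; and the final limiting step is really definitional, since for deterministic integrands and $H>1/2$ the Wiener integral $\int_{0}^{T}q\,dW^{H}$ is defined exactly as the $L^{2}(\Omega)$-limit of step-function integrals under the $\mathcal{H}$-(semi)norm, so your continuous extension coincides with the object appearing in the lemma. Your closing observation that the failure of the bound $\|q\|_{\mathcal{H}}\le C\|q\|_{L^{2}}$ for $H<1/2$ is what forces the Sobolev-type space in Lemma \ref{Lemitoeql05} is also accurate.
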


\begin{lemma}\label{eqcorleml}
	Let $q_{1},q_{2}\in L^{2}([0,T])$,	${}_{0}\partial^{\frac{1-2H}{2}}_{t}q_{1},{}_{0}\partial^{\frac{1-2H}{2}}_{t}q_{2}\in L^{2}([0,T])$, and $H\in(0,1)$. Then we have
	\begin{equation*}
		 \fl\eqalign{
			&\mathbb{E} \left (\int_{0}^{T} q_{1}(r) d W^{H}(r) \int_{0}^{T} q_{2}(r) d W^{H}(r)\right )\leq C\left \|{}_{0}\partial^{\frac{1-2H}{2}}_{t}q_{1}\right \|_{L^{2}([0,T])}\left \|{}_{0}\partial^{\frac{1-2H}{2}}_{t}q_{2}\right \|_{L^{2}([0,T])},\\
			&\mathbb{E} \left (\int_{0}^{T} q_{1}(T-r) d W^{H}(r) \int_{0}^{T} q_{2}(T-r) d W^{H}(r)\right )\leq C\left \|{}_{0}\partial^{\frac{1-2H}{2}}_{t}q_{1}\right \|_{L^{2}([0,T])}\left \|{}_{0}\partial^{\frac{1-2H}{2}}_{t}q_{2}\right \|_{L^{2}([0,T])}.
		}
	\end{equation*}
\end{lemma}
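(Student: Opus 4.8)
The plan is to reduce the bilinear estimate to its diagonal case $q_{1}=q_{2}$ by the Cauchy--Schwarz inequality, and then to prove the diagonal estimate separately on the ranges $H\in(0,1/2)$, $H=1/2$ and $H\in(1/2,1)$, using Lemmas \ref{lemIIiso}--\ref{Lemitoeqge05} together with the equivalence of fractional Sobolev norms. Under the stated hypotheses $\int_{0}^{T}q(r)\,dW^{H}(r)$ is a well-defined element of $L^{2}(\Omega)$ (for $H\ge 1/2$ the condition $q\in L^{2}([0,T])$ suffices, and for $H<1/2$ the assumption ${}_{0}\partial^{\frac{1-2H}{2}}_{t}q\in L^{2}([0,T])$ is precisely what places $q$ in $H^{\frac{1-2H}{2}}_{0}([0,T])$), so Cauchy--Schwarz in $L^{2}(\Omega)$ gives
\begin{equation*}\fl
	\mathbb{E}\left(\int_{0}^{T}q_{1}\,dW^{H}\int_{0}^{T}q_{2}\,dW^{H}\right)\le\left\|\int_{0}^{T}q_{1}\,dW^{H}\right\|_{L^{2}(\Omega)}\left\|\int_{0}^{T}q_{2}\,dW^{H}\right\|_{L^{2}(\Omega)},
\end{equation*}
together with the same inequality when each $q_{i}$ is replaced by $q_{i}(T-\cdot)$. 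Thus it suffices to prove the diagonal bound $\mathbb{E}(\int_{0}^{T}q\,dW^{H})^{2}\le C\|{}_{0}\partial^{\frac{1-2H}{2}}_{t}q\|_{L^{2}([0,T])}^{2}$ for a single admissible $q$, and likewise with $q(T-r)$ in place of $q(r)$.

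For $H=1/2$ the diagonal bound is the It\^{o} isometry, since ${}_{0}\partial^{0}_{t}q=q$. For $H\in(1/2,1)$ I would put $\mu=H-1/2\in(0,1/2)$, so that $2H-2=2\mu-1$ and ${}_{0}\partial^{\frac{1-2H}{2}}_{t}={}_{0}\partial^{-\mu}_{t}$ is the Riemann--Liouville fractional integral of order $\mu$; Lemma \ref{Lemitoeqge05} with $q_{1}=q_{2}=q$ then gives
\begin{equation*}\fl
	\mathbb{E}\left(\int_{0}^{T}q\,dW^{H}\right)^{2}=H(2H-1)\int_{0}^{T}\int_{0}^{T}q(r_{1})q(r_{2})|r_{1}-r_{2}|^{2\mu-1}\,dr_{1}\,dr_{2},
\end{equation*}
and, extending $q$ by zero so that ${\bf supp}\,q\subset[0,T]$, the second inequality of Lemma \ref{lemIIiso} with $u=v=q$, $a=0$, $b=T$ and $\nu=\mu$ bounds the right-hand side by $C\|{}_{0}\partial^{-\mu}_{t}q\|_{L^{2}([0,T])}^{2}$, as wanted. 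The variant with $q(T-r)$ follows immediately because the substitution $r_{i}\mapsto T-r_{i}$ leaves this double integral unchanged.

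For $H\in(0,1/2)$ I would put $\nu=1/2-H\in(0,1/2)$, so that $2-2H=1+2\nu$ and ${}_{0}\partial^{\frac{1-2H}{2}}_{t}={}_{0}\partial^{\nu}_{t}$ is the Riemann--Liouville derivative of order $\nu$; Lemma \ref{Lemitoeql05} with $q_{1}=q_{2}=q$ (again extended by zero) gives
\begin{equation*}\fl
	\mathbb{E}\left(\int_{0}^{T}q\,dW^{H}\right)^{2}=\frac{1}{2}H(1-2H)\int_{\mathbb{R}}\int_{\mathbb{R}}\frac{(q(r_{1})-q(r_{2}))^{2}}{|r_{1}-r_{2}|^{1+2\nu}}\,dr_{1}\,dr_{2}=C\int_{\mathbb{R}}|\omega|^{2\nu}|\mathcal{F}(q)(\omega)|^{2}\,d\omega,
\end{equation*}
the last step being the standard Fourier identity for the Gagliardo $H^{\nu}$-seminorm. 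It then remains to establish, for $q$ supported in $[0,T]$ and $\nu\in(0,1/2)$, the estimate $\int_{\mathbb{R}}|\omega|^{2\nu}|\mathcal{F}(q)(\omega)|^{2}\,d\omega\le C\|{}_{0}\partial^{\nu}_{t}q\|_{L^{2}([0,T])}^{2}$, which is the inequality extended from \cite{Nie.2021428AucaftfdedbfGnwHiHi01}: it can be obtained by combining the smoothing property ${}_{0}\partial^{-\nu}_{t}\colon L^{2}([0,T])\to H^{\nu}([0,T])$ with the identity ${}_{0}\partial^{-\nu}_{t}{}_{0}\partial^{\nu}_{t}q=q$ and the boundedness (valid for $\nu<1/2$) of extension by zero $H^{\nu}([0,T])\to H^{\nu}(\mathbb{R})$. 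The $q(T-r)$ variant follows because the Gagliardo seminorm is invariant under $r\mapsto T-r$. I expect this last step --- absorbing the tail of ${}_{0}\partial^{\nu}_{t}q$ on $(T,\infty)$ and the endpoint ``jump'' contributions at $0$ and $T$ into $\|{}_{0}\partial^{\nu}_{t}q\|_{L^{2}([0,T])}$ alone --- to be the main obstacle; it is the technical ingredient borrowed from \cite{Nie.2021428AucaftfdedbfGnwHiHi01}. (One may alternatively skip the $L^{2}(\Omega)$ step and apply Cauchy--Schwarz directly to the positive semidefinite double-integral forms produced by Lemmas \ref{Lemitoeqge05} and \ref{Lemitoeql05}, which leads to the same bounds.)
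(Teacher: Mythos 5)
Your proposal is correct and follows essentially the same route as the paper: the same case split $H\in(0,\tfrac12)$, $H=\tfrac12$, $H\in(\tfrac12,1)$, with Lemma \ref{Lemitoeqge05} plus the second inequality of Lemma \ref{lemIIiso} for $H>\tfrac12$, the It\^{o} isometry at $H=\tfrac12$, and Lemma \ref{Lemitoeql05} combined with the Fourier/Gagliardo characterization and the equivalence $\|{}_{0}\partial^{\nu}_{t}q\|_{L^{2}}\sim|q|_{H^{\nu}}$ (cited from \cite{Nie.2021428AucaftfdedbfGnwHiHi01,Ervin.2006Vfftsfade}) for $H<\tfrac12$. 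Your preliminary reduction to the diagonal case via Cauchy--Schwarz in $L^{2}(\Omega)$ and your explicit reflection argument for the $q(T-r)$ variant are only cosmetic variations on the paper's bilinear treatment, which handles that variant "similarly."
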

\begin{proof}
	When $H\in(\frac{1}{2},1)$, the desired results can be got by Lemmas \ref{lemIIiso} and \ref{Lemitoeqge05} directly. 
	For $H=\frac{1}{2}$, the desired results can be got by It{\^o} isometry and Cauchy-Schwarz inequality.
		When $H\in(0,\frac{1}{2})$, using Lemma \ref{Lemitoeql05},  Parseval's equality and Cauchy-Schwarz inequality, one has
	\begin{equation*}
		\eqalign{
			&\mathbb{E} \left [\int_{0}^{T} q_{1}(r) d W^{H}(r) \int_{0}^{T} q_{2}(r) d W^{H}(r)\right ]\\
			\leq&C\left (\int_{\mathbb{R}} \int_{\mathbb{R}} \frac{(q_{1}(r_{1})-q_{1}(r_{2}))^{2}}{|r_{1}-r_{2}|^{2 -2H}} d r_{1} d r_{2}\right )^{\frac{1}{2}}\left (\int_{\mathbb{R}} \int_{\mathbb{R}} \frac{(q_{2}(r_{1})-q_{2}(r_{2}))^{2}}{|r_{1}-r_{2}|^{2 -2H}} d r_{1} d r_{2}\right )^{\frac{1}{2}}
		}
	\end{equation*}
	Following the facts that the equivalence of $H^{s}(a,b)$ and $H^{s}_{0}(a,b)$ with $s\in (0,\frac{1}{2})$ \cite{Adams.2003Ss,Ervin.2006Vfftsfade,Nie.2021428AucaftfdedbfGnwHiHi01} and
	\begin{equation*}
		C_{1}\|{}_{a}\partial ^{s}_{x}u\|_{L^{2}(a,b)}\leq |u|_{H^{s}(a,b)}\leq C_{2}\|{}_{a}\partial ^{s}_{x}u\|_{L^{2}(a,b)},
	\end{equation*}
	one can get the first desired results. As for the second one, it can be obtained similarly.    
\end{proof}
Next, we give the expression of mild solution. To get the mild solution of Eq. \eref{eqretosol}, we introduce the Mittag-Leffler function \cite{Gorenflo.2014MLFRTaA},
\begin{equation*}
	E_{\alpha,\beta}(z)=\sum_{k=0}^{\infty}\frac{z^{k}}{\Gamma(k\alpha+\beta)}
\end{equation*}
for $z\in \mathbb{C}$. According to \cite{Gorenflo.2014MLFRTaA}, the Laplace transform of $t^{\beta-1}E_{\alpha,\beta}(\pm\lambda t^{\alpha})$ with $\alpha,\beta>0$ can be written as
\begin{equation*}
	\int_{0}^{\infty}e^{-zt}t^{\beta-1}E_{\alpha,\beta}(\pm\lambda t^{\alpha})dt=\frac{z^{\alpha-\beta}}{z^{\alpha}\mp \lambda}.
\end{equation*}
For the Mittag-Leffler function, we have the following lemma.
\begin{lemma}[\cite{Sakamoto.2011Ivbvpffdweaatsip}]\label{lemmtal}
	The function $E_{\alpha,1}(t)$ with $\alpha\in(0,1)$ is completely monotonic, i.e.,
	\begin{equation*}
		(-1)^{n}\frac{d^{n}E_{\alpha,1}(-t)}{dt^{n}}\geq 0,\quad t\geq0,~n\in \mathbb{N}.
	\end{equation*}
\end{lemma}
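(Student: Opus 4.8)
The plan is to realize $E_{\alpha,1}(-t)$ as the Laplace transform, in the variable $t$, of a finite nonnegative measure on $[0,\infty)$ and then to differentiate under the integral sign. Concretely, suppose we can produce a (necessarily probability) measure $\mu_\alpha$ on $[0,\infty)$ with all moments finite and
\begin{equation*}
	E_{\alpha,1}(-t)=\int_0^\infty e^{-ts}\,d\mu_\alpha(s),\qquad t\ge 0 .
\end{equation*}
Since $\int_0^\infty s^{n+1}\,d\mu_\alpha(s)<\infty$, dominated convergence permits differentiating $n$ times with respect to $t$, and
\begin{equation*}
	(-1)^{n}\frac{d^{n}}{dt^{n}}E_{\alpha,1}(-t)=\int_0^\infty e^{-ts}s^{n}\,d\mu_\alpha(s)\ge 0 ,\qquad t\ge 0,\ n\in\mathbb{N},
\end{equation*}
which is precisely the assertion. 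So everything reduces to exhibiting such a $\mu_\alpha$.

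For that I would take $\mu_\alpha$ to be the law of $Y:=S^{-\alpha}$, where $S$ is a positive $\alpha$-stable random variable, i.e.\ a $[0,\infty)$-valued random variable with $\mathbb{E}[e^{-\lambda S}]=e^{-\lambda^{\alpha}}$ for $\lambda\ge0$; such an $S$ exists because $\lambda\mapsto\lambda^{\alpha}$ is a Bernstein function vanishing at the origin, so $e^{-\lambda^{\alpha}}$ is the Laplace transform of an infinitely divisible probability law on $[0,\infty)$. Inserting $S^{-\alpha k}=\frac{1}{\Gamma(\alpha k)}\int_0^\infty\lambda^{\alpha k-1}e^{-\lambda S}\,d\lambda$ for $k\ge1$, taking expectations, and substituting $u=\lambda^{\alpha}$ gives $\mathbb{E}[Y^{k}]=\Gamma(k+1)/\Gamma(\alpha k+1)$ for all $k\ge0$. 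Because $\sum_{k\ge0}\frac{t^{k}}{k!}\mathbb{E}[Y^{k}]=\sum_{k\ge0}\frac{t^{k}}{\Gamma(\alpha k+1)}=E_{\alpha,1}(t)<\infty$ for every $t$, the moment generating function of $Y$ is finite everywhere, term-by-term integration of $e^{-tY}=\sum_{k}\frac{(-tY)^{k}}{k!}$ is legitimate, and therefore
\begin{equation*}
	\mathbb{E}[e^{-tY}]=\sum_{k=0}^{\infty}\frac{(-t)^{k}}{k!}\,\mathbb{E}[Y^{k}]=\sum_{k=0}^{\infty}\frac{(-t)^{k}}{\Gamma(\alpha k+1)}=E_{\alpha,1}(-t),
\end{equation*}
which is the representation we wanted, with the same moment bound making the differentiation above rigorous.

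I expect the main obstacle to be the existence of $\mu_\alpha$ itself, equivalently the fact that $\big(\Gamma(k+1)/\Gamma(\alpha k+1)\big)_{k\ge0}$ is a Stieltjes moment sequence, equivalently the nonnegativity on $[0,\infty)$ of the density of $Y$, namely the M-Wright/Mainardi function $M_\alpha(s)=\frac{1}{\alpha}s^{-1-1/\alpha}g_\alpha(s^{-1/\alpha})$ with $g_\alpha$ the one-sided $\alpha$-stable density; I would import this as the classical existence of positive $\alpha$-stable laws for $\alpha\in(0,1)$. If one wishes to stay purely analytic, the same representation can instead be obtained, in the spirit of Pollard, from the Hankel-loop formula $E_{\alpha,1}(z)=\frac{1}{2\pi i}\int_{\mathcal{C}}\frac{e^{w}w^{\alpha-1}}{w^{\alpha}-z}\,dw$ (see \cite{Gorenflo.2014MLFRTaA}): for $z=-t<0$ and $\alpha\in(0,1)$ the factor $w^{\alpha}+t$ has no zero in the cut plane, so one may deform $\mathcal{C}$ to a Hankel contour whose rays make angle $\pm\theta$ with $\theta\in(\frac{\pi}{2},\min(\pi,\frac{\pi}{2\alpha}))$, on which $\mathrm{Re}(w^{\alpha})>0$; then $\frac{1}{w^{\alpha}+t}=\int_0^\infty e^{-ts}e^{-sw^{\alpha}}\,ds$ together with Fubini yields $E_{\alpha,1}(-t)=\int_0^\infty e^{-ts}M_\alpha(s)\,ds$ with $M_\alpha(s)=\frac{1}{2\pi i}\int_{\mathcal{C}}e^{w-sw^{\alpha}}w^{\alpha-1}\,dw$. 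In either route, checking $M_\alpha\ge0$ is the crux and the only place where the restriction $\alpha<1$ is really used.
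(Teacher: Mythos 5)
Your argument is correct, but note that the paper itself gives no proof of this lemma: it is imported by citation (ultimately going back to Pollard's 1948 theorem, quoted via Sakamoto--Yamamoto and Gorenflo et al.), so there is no in-paper argument to match. What you supply is essentially the classical proof made self-contained: you realize $E_{\alpha,1}(-t)=\int_0^\infty e^{-ts}\,d\mu_\alpha(s)$ with $\mu_\alpha$ the law of $S^{-\alpha}$ for a one-sided $\alpha$-stable $S$, verify the moments $\mathbb{E}[Y^k]=\Gamma(k+1)/\Gamma(\alpha k+1)$ (the Gamma-integral computation and the substitution $u=\lambda^\alpha$ check out), justify the term-by-term summation by the everywhere-finite moment generating function, and then differentiate under the integral; each step is sound, and the only imported ingredient is the existence of the positive $\alpha$-stable law, i.e.\ the complete monotonicity of $e^{-\lambda^\alpha}$, which is standard Bernstein-function theory and not circular. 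Your alternative Hankel-contour route is also correctly calibrated: the angle condition $\theta\in(\tfrac{\pi}{2},\tfrac{\pi}{2\alpha})$ is exactly where $\alpha\in(0,1)$ enters, matching Pollard's original argument. Compared with the paper's bare citation, your route costs a page but buys more than the lemma states: it produces the explicit Bernstein representation $E_{\alpha,1}(-t)=\int_0^\infty e^{-ts}M_\alpha(s)\,ds$ with the M-Wright density, from which complete monotonicity, positivity, and monotone decay of $E_{\alpha,1}(-\lambda_k^s t^\alpha)$ (the facts actually used in Lemmas 4.1 and 4.2 of the paper) all follow at once.
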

Introduce 
\begin{equation*}
	E(t)=\frac{1}{2\pi\mathbf{i}}\int_{\Gamma_{\theta,\kappa}}e^{zt}z^{\alpha-1}(z^{\alpha}+A^{s})^{-1}dz,
\end{equation*}
which can also be written as
\begin{equation*}
	E(t)u=\sum_{k=1}^{\infty}E_{\alpha,1}(-\lambda_{k}^{s}t^{\alpha})(u,\phi_{k})\phi_{k}.
\end{equation*}
Let $\Gamma_{\theta,\kappa}$ with $\theta\in(\frac{\pi}{2},\pi)$ and $\kappa>0$ be a contour defined by
\begin{equation*}
	\Gamma_{\theta,\kappa}=\{re^{-\mathbf{i}\theta}:r\geq \kappa\}\cup\{\kappa e^{\mathbf{i}\psi}:|\psi|\leq \theta\}\cup\{re^{-\mathbf{i}\theta}:r\geq \kappa\}.
\end{equation*}
By the interpolation properties \cite{Adams.2003Ss}, it is easy to verify that for $z\in\Sigma_{\theta}=\{z\in\mathbb{C},z\neq 0,|\arg z|\leq \theta\}$ $(\theta\in(\frac{\pi}{2},\pi))$, one has
\begin{equation}\label{eqresoest}
	\|A^{\sigma s}\tilde{E}\|\leq C|z|^{\sigma\alpha-1},
\end{equation}
where $\sigma\in[0,1]$ and $\|\cdot\|$ means the operator norm from $L^{2}(D)$ to $L^{2}(D)$.
Thus by Laplace transform, we can write the solution of Eq. \eref{eqretosol} as
\begin{equation}\label{eqsolrep1}
	u(x,t)=\int_{0}^{t}E(t-r)h(r)drf(x)+\int_{0}^{t}E(t-r)dW^{H}(r)g(x).
\end{equation}

\section{The direct problem}\label{Sec3}
In this section, we consider the well-posedness of Eq. \eref{eqretosol}. Different from the discussions in \cite{Feng.2020AirspfttfdedbafBm,Niu.2020Airspiasfde}, we develop the regularity theory by operator approach (one can refer to \cite{Pruss.2012?Eieaa}) and Lemma \ref{eqcorleml}, which help us give a unified proof for all $H\in(0,1)$ and simplify the proof significantly.
\begin{lemma}
	Let $u$ be the solution of Eq. \eref{eqretosol}, $f(x),g(x)\in L^{2}(D)$, and $h(t)\in L^{\infty}([0,T])$. 
 Then we have
	\begin{equation*}
		\mathbb{E}\|u\|^{2}_{L^{2}(D\times [0,T])}\leq C\|h\|_{L^{\infty}([0,T])}^{2}\|f(x)\|_{L^{2}(D)}^{2}+CT^{2H+1}\|g(x)\|_{L^{2}(D)}^{2}
	\end{equation*}
	and
	\begin{equation*}
		\mathbb{E}\|A^{\sigma s}u(t)\|^{2}_{L^{2}(D)}\leq C\|h\|_{L^{\infty}([0,T])}^{2}\|f(x)\|_{L^{2}(D)}^{2}+Ct^{2H-2\sigma\alpha}\|g(x)\|_{L^{2}(D)}^{2},
	\end{equation*}
	where $\sigma\in[0, \min(1,\frac{H}{\alpha}-\epsilon)]$ with $\epsilon>0$ arbitrarily small.
\end{lemma}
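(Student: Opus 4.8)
The plan is to split $u(x,t)=u_1(x,t)+u_2(x,t)$ according to the two terms in the representation \eref{eqsolrep1}, namely the deterministic part $u_1(x,t)=\int_0^t E(t-r)h(r)\,dr\,f(x)$ and the stochastic part $u_2(x,t)=\int_0^t E(t-r)\,dW^H(r)\,g(x)$, and to estimate each separately in the $L^2(D)$ and $A^{\sigma s}L^2(D)$ norms. Expanding in the eigenbasis $\{\phi_k\}$ reduces everything to scalar estimates: writing $f=\sum_k f_k\phi_k$, $g=\sum_k g_k\phi_k$, we have $u_1=\sum_k f_k\big(\int_0^t E_{\alpha,1}(-\lambda_k^s(t-r)^\alpha)h(r)\,dr\big)\phi_k$ and $u_2=\sum_k g_k\big(\int_0^t E_{\alpha,1}(-\lambda_k^s(t-r)^\alpha)\,dW^H(r)\big)\phi_k$, and by orthonormality the two norms are sums over $k$ of $|f_k|^2$ (resp. $|g_k|^2$, after taking $\mathbb{E}$) times the appropriate scalar factor, with an extra $\lambda_k^{2\sigma s}$ weight for the $A^{\sigma s}$ norm.

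For $u_1$: by Lemma \ref{lemmtal} (complete monotonicity gives $0\le E_{\alpha,1}(-\lambda_k^s r^\alpha)\le 1$) together with the resolvent bound \eref{eqresoest} specialized to $\sigma$ (which yields $\lambda_k^{\sigma s}\int_0^t E_{\alpha,1}(-\lambda_k^s(t-r)^\alpha)\,dr\le \|A^{\sigma s}\int_0^{\cdot}E\,dr\|\le C$ via the Laplace-transform identity $\int_0^t E_{\alpha,1}(-\lambda_k^s r^\alpha)\,dr = t E_{\alpha,2}(-\lambda_k^s t^\alpha)$ and the standard bound $|E_{\alpha,2}(-x)|\le C/(1+x)$ for $x\ge0$), one bounds the scalar factor for the $A^{\sigma s}$ norm by a constant, and for the plain $L^2$ norm by $Ct^2$, uniformly in $k$. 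Pulling out $\|h\|_{L^\infty}$ and summing $\sum_k |f_k|^2=\|f\|_{L^2(D)}^2$ gives the $f$-contributions claimed (the $L^2(D\times[0,T])$ bound then follows by integrating the $L^2(D)$-in-time estimate over $[0,T]$, which only costs a bounded factor since $T$ is fixed). For $u_2$: taking $\mathbb{E}$ and using orthonormality, $\mathbb{E}\|u_2(t)\|_{L^2(D)}^2=\sum_k g_k^2\,\mathbb{E}\big(\int_0^t E_{\alpha,1}(-\lambda_k^s(t-r)^\alpha)\,dW^H(r)\big)^2$; this is exactly where Lemma \ref{eqcorleml} (the second inequality, with the time-reversed integrand $q(r)=E_{\alpha,1}(-\lambda_k^s r^\alpha)$ on $[0,t]$) applies, bounding each term by $C\big\|{}_0\partial_t^{\frac{1-2H}{2}}\big(E_{\alpha,1}(-\lambda_k^s\,\cdot^\alpha)\big)\big\|_{L^2([0,t])}^2$. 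It then remains to estimate this fractional-derivative norm of the Mittag-Leffler function, and to do so with the correct powers of $\lambda_k^s$ and $t$: one expects $\big\|{}_0\partial_t^{\frac{1-2H}{2}}E_{\alpha,1}(-\lambda_k^s\,\cdot^\alpha)\big\|_{L^2([0,t])}^2\le C\,t^{2H-2\sigma\alpha}\lambda_k^{-2\sigma s}$ for $\sigma\in[0,\min(1,H/\alpha-\epsilon)]$ (this is the "novel estimate" advertised in the abstract), so that multiplying by the $\lambda_k^{2\sigma s}$ from the $A^{\sigma s}$ norm cancels the $\lambda_k$-dependence and summing $\sum_k g_k^2=\|g\|_{L^2(D)}^2$ closes the bound; the plain $L^2(D)$ case is $\sigma=0$, and integrating in $t$ over $[0,T]$ turns $t^{2H}$ into $T^{2H+1}$.

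The main obstacle is precisely this last Mittag-Leffler estimate: bounding $\big\|{}_0\partial_t^{\frac{1-2H}{2}}E_{\alpha,1}(-\lambda_k^s\,\cdot^\alpha)\big\|_{L^2([0,t])}$ with the sharp joint dependence on $\lambda_k$ and $t$, uniformly for $H\in(0,1)$ and for the whole admissible range of $\sigma$. The constraint $\sigma\le H/\alpha-\epsilon$ must enter exactly here, to keep the exponent $2H-2\sigma\alpha$ positive (so the $t$-power is integrable at $0$ and the fractional derivative is well defined); for $H\le1/2$ one is taking a genuine fractional derivative of order $\frac{1-2H}{2}\in(0,\frac12)$, for $H>1/2$ the order is negative (a fractional integral), so the computation naturally divides along $H=1/2$ even though the final statement is uniform. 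I would handle it by writing ${}_0\partial_t^{\frac{1-2H}{2}}E_{\alpha,1}(-\lambda_k^s t^\alpha)$ either through its series $\sum_{j\ge0}(-\lambda_k^s)^j t^{\alpha j}/\Gamma(\alpha j+1)$ differentiated termwise (giving $\sum_j(-\lambda_k^s)^j t^{\alpha j-\frac{1-2H}{2}}/\Gamma(\alpha j+1-\frac{1-2H}{2})$, recognizable as $t^{\frac{2H-1}{2}}E_{\alpha,\frac{2H+1}{2}}(-\lambda_k^s t^\alpha)$) or through the contour/Laplace representation, then interpolating between the crude bound $|E_{\alpha,\beta}(-x)|\le C$ and the decay bound $|E_{\alpha,\beta}(-x)|\le C/(1+x)$ to extract the factor $(1+\lambda_k^s t^\alpha)^{-\sigma}$, and finally computing the resulting elementary $t$-integral. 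Everything else is bookkeeping: orthonormality, $\|h\|_{L^\infty}$ factored out of the $h$-integral, and a fixed-$T$ integration in time.
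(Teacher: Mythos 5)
Your proposal is correct, and its skeleton coincides with the paper's: the same splitting of the mild solution \eref{eqsolrep1} into a deterministic and a stochastic part, the same use of Lemma \ref{eqcorleml} to convert the fBm integral into an $L^{2}$ bound on the Riemann--Liouville derivative of order $\frac{1-2H}{2}$ of the solution kernel, and the same origin of the restriction $\sigma\le\min(1,\frac{H}{\alpha}-\epsilon)$ from the integrability of $r^{2H-1-2\sigma\alpha}$ at $r=0$. Where you diverge is in how that kernel is estimated: you expand in the eigenbasis and work with the scalar Mittag-Leffler functions, computing ${}_{0}\partial_{t}^{\frac{1-2H}{2}}E_{\alpha,1}(-\lambda_{k}^{s}t^{\alpha})=t^{\frac{2H-1}{2}}E_{\alpha,\frac{2H+1}{2}}(-\lambda_{k}^{s}t^{\alpha})$ termwise and then interpolating the bounds $|E_{\alpha,\beta}(-x)|\le C$ and $|E_{\alpha,\beta}(-x)|\le C(1+x)^{-1}$ to extract $(1+\lambda_{k}^{s}t^{\alpha})^{-\sigma}$, which indeed yields the advertised estimate $Ct^{2H-2\sigma\alpha}\lambda_{k}^{-2\sigma s}$ (and similarly $tE_{\alpha,2}(-\lambda_{k}^{s}t^{\alpha})$ handles the deterministic part). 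The paper instead stays at the operator level: it inserts the factor $z^{\frac{1-2H}{2}}$ (and $A^{\sigma s}$) directly into the contour representation of $E(t)$ and invokes only the resolvent estimate \eref{eqresoest}, so that no Mittag-Leffler asymptotics or termwise differentiation are needed -- this is precisely the ``operator approach'' the authors advertise as simplifying the argument relative to eigen-expansion proofs such as yours. Both routes are unified in $H\in(0,1)$ because the unification resides in Lemma \ref{eqcorleml}; your version is more concrete per mode but requires justifying the termwise fractional differentiation and the uniform decay bound for $E_{\alpha,\beta}$ with $\beta=\frac{2H+1}{2}$, while the paper's version gets the same exponents in two lines of contour estimates. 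Your side remarks (the jump of the zero-extended kernel at $r=t$ being harmless since $\frac{1-2H}{2}<\frac12$, and the negative-order case $H>\frac12$ being a fractional integral) are accurate and do not affect the conclusion.
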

\begin{proof}
	According to \eref{eqsolrep1}, we have
	\begin{equation*}
		\eqalign{
			\mathbb{E}\|u\|^{2}_{L^{2}(D\times [0,T])}&=\mathbb{E}\int_{0}^{T}\|u\|_{L^{2}(D)}^{2}dt\\
			&\leq C\int_{0}^{T}\left \|\int_{0}^{t}E(t-r)h(r)dr\right \|^{2}\|f(x)\|_{L^{2}(D)}^{2}dt\\
			&\quad+C\int_{0}^{T}\mathbb{E}\left \|\int_{0}^{t}E(t-r)dW^{H}(r)\right \|^{2}\|g(x)\|_{L^{2}(D)}^{2}dt\\
			&\leq C\int_{0}^{T}\vartheta_{1}(t)\|f(x)\|_{L^{2}(D)}^{2}dt+C\int_{0}^{T}\vartheta_{2}(t)\|g(x)\|_{L^{2}(D)}^{2}dt.
		}
	\end{equation*}
	By the resolvent estimate \eref{eqresoest}, we have $\|\tilde{E}(z)\|\leq C|z|^{-1}$, which leads to $\|E(t)\|\leq C$. Thus
	\begin{equation*}
		\vartheta_{1}\leq C\left(\int_{0}^{t}|h(r)|dr\right)^{2}\leq C\|h\|_{L^{\infty}([0,T])}^{2}.
	\end{equation*}
	As for $\vartheta_{2}$, using Lemma \ref{eqcorleml}, we have
	\begin{equation*}
		\eqalign{
			\vartheta_{2}\leq\int_{0}^{t}\left \|{}_{0}\partial^{\frac{1-2H}{2}}_{r}E(r)\right \|^{2}dr.
		}
	\end{equation*}
	The definition of $E$ and the resolvent estimate \eref{eqresoest} give
	\begin{equation*}
		\eqalign{
			\vartheta_{2}&\leq C\int_{0}^{t}\left (\int_{\Gamma_{\theta,\kappa}}|e^{zr}||z|^{\alpha-1}\|(z^{\alpha}+A^{s})^{-1}\||z|^{\frac{1-2H}{2}}|dz|\right )^{2}dr\\
			&\leq C\int_{0}^{t}\left (\int_{\Gamma_{\theta,\kappa}}|e^{zr}||z|^{-\frac{1}{2}-H}|dz|\right )^{2}dr\\
			&\leq C\int_{0}^{t}r^{2H-1}dr\\
			&\leq Ct^{2H}.
		}
	\end{equation*}
	As for $\mathbb{E}\|A^{\sigma s}u\|^{2}_{L^{2}(D)}$, we can get
	\begin{equation*}
		\eqalign{
			\mathbb{E}\|A^{\sigma s}u\|^{2}_{L^{2}(D)}&\leq C\left \|\int_{0}^{t}A^{\sigma s}E(t-r)h(r)f(x)dr\right \|_{L^{2}(D)}^{2}\\
			&\quad+C\mathbb{E}\left \|\int_{0}^{t}A^{\sigma s}E(t-r)g(x)dW^{H}(r)\right \|_{L^{2}(D)}^{2}\\
			&\leq\vartheta_{3}+\vartheta_{4}.
		}
	\end{equation*}
	The resolvent estimate gives
	\begin{equation*}
		 \fl\eqalign{
			\vartheta_{3}&\leq C\Bigg(\int_{0}^{t}\left\|\int_{\Gamma_{\theta,\kappa}}e^{z(t-r)}A^{\sigma s}z^{\alpha-1}(z^{\alpha}+A^{s})^{-1}dz\right\||h(r)|dr\Bigg)^{2}\|f(x)\|_{L^{2}(D)}^{2}\\
			&\leq C\Bigg(\int_{0}^{t}\int_{\Gamma_{\theta,\kappa}}|e^{z(t-r)}|\left\|A^{\sigma s}z^{\alpha-1}(z^{\alpha}+A^{s})^{-1}\right\||dz||h(r)|dr\Bigg)^{2}\|f(x)\|_{L^{2}(D)}^{2}\\
			&\leq C\Bigg(\int_{0}^{t}\int_{\Gamma_{\theta,\kappa}}|e^{z(t-r)}||z|^{\sigma\alpha-1}|dz||h(r)|dr\Bigg)^{2}\|f(x)\|_{L^{2}(D)}^{2}\\
			&\leq C\Bigg(\int_{0}^{t}(t-r)^{-\sigma \alpha}|h(r)|dr\Bigg)^{2}\|f(x)\|_{L^{2}(D)}^{2}\\
			&\leq C\|h\|_{L^{\infty}([0,T])}^{2}\|f(x)\|_{L^{2}(D)}^{2},
		}
	\end{equation*}
	where $\sigma\in[0,1]$. As for $\vartheta_{4}$, with the help of Lemma \ref{eqcorleml} and resolvent \eref{eqresoest}, we obtain
	\begin{equation*}
		\eqalign{
			\vartheta_{4}&\leq C\int_{0}^{t}\left\|\int_{\Gamma_{\theta,\kappa}}e^{zr}z^{\frac{1-2H}{2}}A^{\sigma s}z^{\alpha-1}(z^{\alpha}+A^{s})^{-1}dz\right \|^{2}dr\|g(x)\|_{L^{2}(D)}\\
			&\leq C\int_{0}^{t}\left(\int_{\Gamma_{\theta,\kappa}}|e^{zr}|\left\|z^{\frac{1-2H}{2}}A^{\sigma s}z^{\alpha-1}(z^{\alpha}+A^{s})^{-1}\right \||dz|\right)^{2}dr\|g(x)\|_{L^{2}(D)}\\
			&\leq C\int_{0}^{t}\left(\int_{\Gamma_{\theta,\kappa}}|e^{zr}||z|^{\frac{1-2H}{2}+\sigma\alpha-1}|dz|\right)^{2}dr\|g(x)\|_{L^{2}(D)}\\
			&\leq C\int_{0}^{t}r^{2H-1-2\sigma\alpha}dr\|g(x)\|_{L^{2}(D)},
		}
	\end{equation*}
	where we need to require $2H-2\sigma\alpha>0$ to preserve the boundedness of $\vartheta_{4}$, i.e., $\sigma\leq \min(1,\frac{H}{\alpha}-\epsilon)$ with $\epsilon>0$ arbitrary small.
\end{proof}
\section{The inverse problem}\label{Sec4}
Now, we first provide the reconstructions of $f$ and $|g|$, and then show the uniqueness and instability of the reconstructions.
\subsection{Reconstruction of $f$ and $|g|$}
Introduce $u_{k}(t)=(u(t),\phi_{k})$ and denote $(\cdot,\cdot)$ as $L^{2}$ inner product. Using Eq. \eref{eqsolrep1}, one can get
\begin{equation*}\fl
	u_{k}(t)=\int_{0}^{t}E_{\alpha,1}(-\lambda_{k}^{s}(t-r)^{\alpha})h(r)drf_{k}+\int_{0}^{t}E_{\alpha,1}(-\lambda_{k}^{s}(t-r)^{\alpha})dW^{H}(r)g_{k},
\end{equation*}
where $f_{k}=(f,\phi_{k})$ and $g_{k}=(g,\phi_{k})$. Thus we have
\begin{equation}\label{equdeterf}
	\mathbb{E}(u_{k}(t))=\int_{0}^{t}E_{\alpha,1}(-\lambda_{k}^{s}(t-r)^{\alpha})h(r)drf_{k}
\end{equation}
and the covariance between $u_{k}$ and $u_{l}$ is
\begin{equation}\label{equdeterg}\fl
	{\rm Cov}(u_{k}(t),u_{l}(t))=g_{k}g_{l}\mathbb{E}\Bigg(\int_{0}^{t}E_{\alpha,1}(-\lambda_{k}^{s}(t-r)^{\alpha})dW^{H}(r)\int_{0}^{t}E_{\alpha,1}(-\lambda_{l}^{s}(t-r)^{\alpha})dW^{H}(r)\Bigg).
\end{equation}
Then we provide some lemmas, which play key roles in the discussion of the uniqueness.
\begin{lemma}\label{lemunq1}
	Let $h(t)\geq C_{h}>0$ for $t\in(0,T)$. Then for each fixed $k\in\mathbb{N}^{*}$, there exists a positive constant $C_{1}$ such that
	\begin{equation*}
		\int_{0}^{t}E_{\alpha,1}(-\lambda_{k}^{s}(t-r)^{\alpha})h(r)dr\geq C_{1}>0.
	\end{equation*}
\end{lemma}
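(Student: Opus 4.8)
The plan is to use the complete monotonicity of the Mittag-Leffler function (Lemma \ref{lemmtal}) together with the lower bound $h(t)\geq C_h>0$. First I would observe that since $E_{\alpha,1}(-t)$ is completely monotone on $[0,\infty)$, it is in particular positive and nonincreasing, so that for $r\in(0,t)$ we have $E_{\alpha,1}(-\lambda_k^s(t-r)^\alpha)>0$, and moreover $E_{\alpha,1}(-\lambda_k^s(t-r)^\alpha)\geq E_{\alpha,1}(-\lambda_k^s T^\alpha)$ whenever $t\le T$, because $(t-r)^\alpha\le T^\alpha$. Hence
\begin{equation*}
	\int_{0}^{t}E_{\alpha,1}(-\lambda_{k}^{s}(t-r)^{\alpha})h(r)dr\geq C_{h}\,E_{\alpha,1}(-\lambda_{k}^{s}T^{\alpha})\,t,
\end{equation*}
which is strictly positive for $t>0$; but this degenerates to $0$ as $t\to 0^+$, so a naive uniform-in-$t$ bound cannot hold on all of $[0,T]$, and one needs $t$ bounded away from $0$ — presumably the statement is implicitly for $t$ in a fixed subinterval, or the constant $C_1$ is allowed to depend on the (fixed) evaluation time. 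I would therefore read ``for each fixed $k$'' as also fixing the time $t>0$ at which the quantity is evaluated (as used later in \eref{equdeterf}), and in that case the displayed chain of inequalities already gives the claim with $C_1=C_h E_{\alpha,1}(-\lambda_k^s T^\alpha)\,t>0$.

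Alternatively, to get a bound that does not see the behavior near $r=t$, I would substitute $\tau=t-r$ to write the integral as $\int_0^t E_{\alpha,1}(-\lambda_k^s\tau^\alpha)h(t-\tau)\,d\tau\geq C_h\int_0^t E_{\alpha,1}(-\lambda_k^s\tau^\alpha)\,d\tau$, and then invoke the known closed form $\int_0^t E_{\alpha,1}(-\lambda_k^s\tau^\alpha)\,d\tau=t\,E_{\alpha,2}(-\lambda_k^s t^\alpha)$ together with the positivity of $E_{\alpha,2}$ on the negative real axis (again a consequence of complete monotonicity / the Laplace transform representation recorded just before Lemma \ref{lemmtal}). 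This makes the positivity transparent and shows the dependence on $t$ explicitly.

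The only real subtlety — and the step I would be most careful about — is the claim that the lower bound is a genuine positive constant uniformly over the relevant range of $t$; complete monotonicity handles the positivity and monotonicity of $E_{\alpha,1}$ effortlessly, but one must be honest that the bound vanishes at $t=0$, so the constant $C_1$ is constant in the data $W^H$ and $g$ but not in $t$ unless $t$ is fixed (as it is wherever this lemma is applied). Everything else is routine: monotonicity of $\tau\mapsto(t-r)^\alpha$, positivity of the Mittag-Leffler function from Lemma \ref{lemmtal}, and the elementary estimate $\int_0^t h(r)\,dr\ge C_h t$.
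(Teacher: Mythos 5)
Your argument is essentially the paper's own proof: it invokes the complete monotonicity of $E_{\alpha,1}$ (Lemma \ref{lemmtal}) to bound the kernel below by its value at the endpoint ($E_{\alpha,1}(-\lambda_k^s t^\alpha)$ in the paper, $E_{\alpha,1}(-\lambda_k^s T^\alpha)$ in yours) and then uses $h\geq C_h$, arriving at the same lower bound $C\,E_{\alpha,1}(\cdot)\int_0^t h(r)\,dr>0$. Your caveat that the constant degenerates as $t\to0^+$ and so must be read at a fixed evaluation time is accurate and consistent with how the paper implicitly uses the lemma (at $t=T$), so the proposal is correct.
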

\begin{proof}
	According to Lemma \ref{lemmtal} and the property of $h(t)$, one can get
	\begin{equation*}
		\eqalign{
			&\int_{0}^{t}E_{\alpha,1}(-\lambda_{k}^{s}(t-r)^{\alpha})h(r)dr\\
			\geq &CE_{\alpha,1}(-\lambda_{k}^{s}t^{\alpha})\int_{0}^{t}h(r)dr\\
			\geq& C_{1}>0.
		}
	\end{equation*}
\end{proof}

\begin{lemma}\label{lemunq2}
	For each fixed $k,~l\in\mathbb{N}^{*}$, there exists a positive constant $C_{2}$ such that
	\begin{equation*}\fl
		\mathbb{E}\Bigg(\int_{0}^{t}E_{\alpha,1}(-\lambda_{k}^{s}(t-r)^{\alpha})dW^{H}(r)\int_{0}^{t}E_{\alpha,1}(-\lambda_{l}^{s}(t-r)^{\alpha})dW^{H}(r)\Bigg)\geq C_{2}>0.
	\end{equation*}
\end{lemma}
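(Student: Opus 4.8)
The plan is to establish the lower bound by writing the expectation in the three regimes $H\in(0,\tfrac12)$, $H=\tfrac12$, $H\in(\tfrac12,1)$ using Lemmas \ref{Lemitoeql05}, It\^o isometry, and \ref{Lemitoeqge05} respectively, and in each case show the resulting integral is bounded below by a positive constant depending only on $k,l,t$ (and hence, since we want it for fixed $k,l$, on a uniform lower time, e.g.\ restricting attention to $t$ in a fixed interval or noting monotonicity). The key observation is that $E_{\alpha,1}(-\lambda_k^s\tau^\alpha)$ is, by Lemma \ref{lemmtal}, a positive, nonincreasing function of $\tau\ge0$, bounded above by $1$ and bounded below on $[0,T]$ by $E_{\alpha,1}(-\lambda_k^sT^\alpha)>0$. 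Thus the integrand $q_k(r):=E_{\alpha,1}(-\lambda_k^s(t-r)^\alpha)$ is a positive function uniformly bounded away from $0$ and from $\infty$ on $[0,t]$.

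For $H\in(\tfrac12,1)$ the representation \eref{equisog05} gives
\begin{equation*}
	\mathbb{E}\Bigg(\int_{0}^{t}q_k(r)dW^{H}(r)\int_{0}^{t}q_l(r)dW^{H}(r)\Bigg)=H(2H-1)\int_{0}^{t}\int_{0}^{t}q_k(r_1)q_l(r_2)|r_1-r_2|^{2H-2}dr_1dr_2,
\end{equation*}
and since $q_k,q_l\ge E_{\alpha,1}(-\lambda_{\max}^sT^\alpha)=:c_0>0$ on $[0,t]$, the whole integral is $\ge c_0^2H(2H-1)\int_0^t\int_0^t|r_1-r_2|^{2H-2}dr_1dr_2=c_0^2\,t^{2H}$, which is positive. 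For $H=\tfrac12$ the It\^o isometry gives $\int_0^t q_k(r)q_l(r)dr\ge c_0^2 t>0$ directly. The delicate case is $H\in(0,\tfrac12)$, where \eref{equisol05} expresses the quantity as
\begin{equation*}
	\frac12H(1-2H)\int_{\mathbb{R}}\int_{\mathbb{R}}\frac{(q_k(r_1)-q_k(r_2))(q_l(r_1)-q_l(r_2))}{|r_1-r_2|^{2-2H}}dr_1dr_2,
\end{equation*}
with $q_k,q_l$ extended by $0$ outside $[0,t]$. Here the integrand is no longer pointwise nonnegative because $q_k$ has a jump at $r=t$ from $E_{\alpha,1}(-\lambda_k^st^\alpha)>0$ down to $0$, and the increments $(q_k(r_1)-q_k(r_2))$ and $(q_l(r_1)-q_l(r_2))$ can have opposite signs when one of $r_1,r_2$ is inside $(0,t)$ and the other outside.

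The main obstacle is therefore the positivity of this bilinear form for $H\in(0,\tfrac12)$. My plan is to recognize it as (a constant times) the $H^{(1-2H)/2}$ Gagliardo seminorm inner product of $q_k$ and $q_l$, equivalently (up to equivalent norms, by the identification used in the proof of Lemma \ref{eqcorleml}) the $L^2$ inner product of the Riemann--Liouville derivatives ${}_0\partial_t^{(1-2H)/2}q_k$ and ${}_0\partial_t^{(1-2H)/2}q_l$. Since $q_k$ and $q_l$ are both positive, nonincreasing functions on $[0,t]$ with a positive jump down to $0$ at $t$, their fractional derivatives ${}_0\partial_t^{(1-2H)/2}q_k$ share a definite sign structure; alternatively one can use the Fourier representation $\cos(\nu\pi)\int_{\mathbb{R}}|\omega|^{-2\nu}\mathcal{F}(q_k)(\omega)\overline{\mathcal{F}(q_l)(\omega)}d\omega$ from \eref{eqIIiso}. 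The cleanest route I expect to work is to exploit monotonicity directly: writing $q_k=c_0\,\mathbf{1}_{[0,t]}+\tilde q_k$ where $\tilde q_k\ge0$ is still nonincreasing on $[0,t]$, the bilinear form splits into $c_0^2$ times the (explicitly computable, positive) form on $\mathbf{1}_{[0,t]}$, plus cross terms $c_0\langle\mathbf{1}_{[0,t]},\tilde q_l\rangle$ and $c_0\langle\tilde q_k,\mathbf{1}_{[0,t]}\rangle$, plus $\langle\tilde q_k,\tilde q_l\rangle$; one then checks each piece is nonnegative (using that the kernel $|r_1-r_2|^{2H-2}$-type energy of a nonincreasing nonnegative function paired with a nonincreasing nonnegative function is nonnegative, which follows from the Fourier/fractional-derivative representation since the fractional derivative of a nonincreasing function supported on $[0,t]$ has controlled sign), and that the $c_0^2$ term gives the strictly positive lower bound $Cc_0^2t^{2H}$. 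If the sign bookkeeping in this decomposition proves awkward, the fallback is to prove the inequality for $k=l$ first (where \eref{equisol05} is manifestly $\ge0$, being $\tfrac12H(1-2H)$ times a sum of squares, and strictly positive since $q_k$ is nonconstant) and then obtain the general $k\neq l$ case by a polarization/Cauchy--Schwarz-type argument combined with the explicit leading-order behavior as $t\to0^+$, namely $\mathbb{E}(\int_0^t q_k dW^H\int_0^t q_l dW^H)\sim c_0^2\,C_H\,t^{2H}$, which is positive; monotonicity-in-$t$ considerations then upgrade this small-$t$ positivity to all $t>0$ for fixed $k,l$.
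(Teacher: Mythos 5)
Your treatment of $H=\frac{1}{2}$ (It\^o isometry) and $H\in(\frac{1}{2},1)$ (Lemma \ref{Lemitoeqge05} plus the uniform lower bound $E_{\alpha,1}(-\lambda^{s}t^{\alpha})>0$) matches the paper and is fine. The problem is the case $H\in(0,\frac{1}{2})$: the obstacle you set out to circumvent does not exist, and the machinery you propose in its place is not carried through. With $q_k$ extended by zero outside $[0,t]$, the increments in \eref{equisol05} do \emph{not} have opposite signs on the cross region $[0,t]\times[0,t]^{C}$: there $q_k(r_2)=q_l(r_2)=0$, so $(q_k(r_1)-q_k(r_2))(q_l(r_1)-q_l(r_2))=q_k(r_1)q_l(r_1)>0$. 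On $[0,t]\times[0,t]$ the two increments also share a sign, because $q_k(r)=E_{\alpha,1}(-\lambda_k^{s}(t-r)^{\alpha})$ and $q_l(r)$ are monotone in the \emph{same} direction in $r$ (nondecreasing in $r$, by the complete monotonicity of $E_{\alpha,1}$, Lemma \ref{lemmtal}; note your "nonincreasing" refers to the variable $t-r$, not $r$). Hence the integrand in \eref{equisol05} is pointwise nonnegative on all of $\mathbb{R}^{2}$, and the cross region alone already furnishes the strictly positive constant: it contributes at least $H(1-2H)\bigl(\min_{[0,t]}q_k\bigr)\bigl(\min_{[0,t]}q_l\bigr)\int\!\!\int_{[0,t]\times[0,t]^{C}}|r_1-r_2|^{2H-2}dr_1dr_2>0$. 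This single observation is the paper's entire proof of the $H\in(0,\frac{1}{2})$ case, and it is the idea missing from your plan.

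Because you posit a sign problem that is not there, your substitute arguments do not constitute a proof. The decomposition $q_k=c_0\mathbf{1}_{[0,t]}+\tilde q_k$ rests on unproved sign claims about Riemann--Liouville derivatives of monotone functions, and in any case the Gagliardo-type bilinear form is not the plain $L^{2}$ pairing of two left-sided derivatives, so a pointwise sign for ${}_{0}\partial^{\frac{1-2H}{2}}_{t}q_k$ would not by itself determine the sign of the form. The fallback route cannot work as described: polarization expresses the off-diagonal form as a \emph{difference} of nonnegative diagonal terms, and Cauchy--Schwarz only bounds its absolute value from above, so neither yields a positive lower bound for $k\neq l$; moreover the asserted small-$t$ asymptotics $\sim c_0^{2}C_Ht^{2H}$ and the monotonicity of the covariance in $t$ are stated without justification. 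As written, the lemma is therefore not established for $H\in(0,\frac{1}{2})$.
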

\begin{proof}
	For $H=\frac{1}{2}$, from the It\'o isometry one can obtain 
	\begin{equation*}
		\eqalign{
			&\mathbb{E}\Bigg(\int_{0}^{t}E_{\alpha,1}(-\lambda_{k}^{s}(t-r)^{\alpha})dW^{H}(r)\int_{0}^{t}E_{\alpha,1}(-\lambda_{l}^{s}(t-r)^{\alpha})dW^{H}(r)\Bigg)\\
			=&\int_{0}^{t}E_{\alpha,1}(-\lambda_{k}^{s}(t-r)^{\alpha})E_{\alpha,1}(-\lambda_{l}^{s}(t-r)^{\alpha})dr\\
			\geq&CE_{\alpha,1}(-\lambda_{k}^{s}t^{\alpha})E_{\alpha,1}(-\lambda_{l}^{s}t^{\alpha})\geq C> 0.
		}
	\end{equation*}
	For $H\in(\frac{1}{2},1)$, Lemma \ref{Lemitoeqge05} shows
	\begin{equation*}
		\eqalign{
			&\mathbb{E}\Bigg(\int_{0}^{t}E_{\alpha,1}(-\lambda_{k}^{s}(t-r)^{\alpha})dW^{H}(r)\int_{0}^{t}E_{\alpha,1}(-\lambda_{l}^{s}(t-r)^{\alpha})dW^{H}(r)\Bigg)\\
			=&H(2H-1)\int_{0}^{t}\int_{0}^{t}E_{\alpha,1}(-\lambda_{k}^{s}(t-r_{1})^{\alpha})E_{\alpha,1}(-\lambda_{l}^{s}(t-r_{2})^{\alpha})|r_{1}-r_{2}|^{2H-2}dr_{1}dr_{2}\\
			\geq&CE_{\alpha,1}(-\lambda_{k}^{s}t^{\alpha})E_{\alpha,1}(-\lambda_{l}^{s}t^{\alpha})\int_{0}^{t}\int_{0}^{t}|r_{1}-r_{2}|^{2H-2}dr_{1}dr_{2}\\
			\geq&C>0.
		}
	\end{equation*}
	As for $H\in(0,\frac{1}{2})$, introducing
	\begin{equation*}
		E_{k}(r)=\left\{\eqalign{
			&E_{\alpha,1}(-\lambda_{k}^{s}(t-r)^{\alpha}),\qquad r\in [0,t];\\
			&0,\qquad r\in[0,t]^{C},
		}\right.
	\end{equation*}
	one can obtain
	\begin{equation*}
		\eqalign{
			&\mathbb{E}\Bigg(\int_{0}^{t}E_{\alpha,1}(-\lambda_{k}^{s}(t-r)^{\alpha})dW^{H}(r)\int_{0}^{t}E_{\alpha,1}(-\lambda_{l}^{s}(t-r)^{\alpha})dW^{H}(r)\Bigg)\\
			=&\frac{1}{2}H(1-2H)\int_{\mathbb{R}}\int_{\mathbb{R}}\frac{(E_{k}(r_{1})-E_{k}(r_{2}))(E_{l}(r_{1})-E_{l}(r_{2}))}{|r_{1}-r_{2}|^{2-2H}}dr_{1}dr_{2}\\
			=&\frac{1}{2}H(1-2H)\int\int_{[0,t]\times[0,t]}\frac{(E_{k}(r_{1})-E_{k}(r_{2}))(E_{l}(r_{1})-E_{l}(r_{2}))}{|r_{1}-r_{2}|^{2-2H}}dr_{1}dr_{2}\\
			&+H(1-2H)\int\int_{[0,t]\times[0,t]^{C}}\frac{E_{k}(r_{1})E_{l}(r_{1})}{|r_{1}-r_{2}|^{2-2H}}dr_{1}dr_{2}.
		}
	\end{equation*}
	Combining Lemma \ref{lemmtal}, we have
	\begin{equation*}
		\frac{1}{2}H(1-2H)\int\int_{[0,t]\times[0,t]}\frac{(E_{k}(r_{1})-E_{k}(r_{2}))(E_{l}(r_{1})-E_{l}(r_{2}))}{|r_{1}-r_{2}|^{2-2H}}dr_{1}dr_{2}\geq 0,
	\end{equation*}
	and
	\begin{equation*}
		\eqalign{
			&\mathbb{E}\Bigg(\int_{0}^{t}E_{\alpha,1}(-\lambda_{k}^{s}(t-r)^{\alpha})dW^{H}(r)\int_{0}^{t}E_{\alpha,1}(-\lambda_{l}^{s}(t-r)^{\alpha})dW^{H}(r)\Bigg)\\
			\geq&H(1-2H)E_{k}(t)E_{l}(t)\int\int_{[0,t]\times[0,t]^{C}}\frac{1}{|r_{1}-r_{2}|^{2-2H}}dr_{1}dr_{2}\\
			\geq&C_{2}>0.
		}
	\end{equation*}
\end{proof}

Combining Eqs. \eref{equdeterf}, \eref{equdeterg}, and Lemmas \ref{lemunq1}, \ref{lemunq2}, one can get the uniqueness of the inverse problem.

\begin{theorem}
	Let $f(x),g(x)\in L^{2}(D)$, and $\|g(x)\|_{L^{2}(D)}\neq0$. Then one can determine the source terms $f$ and $|g|$  uniquely by the data set
	\begin{equation*}
		\{(\mathbb{E}u_{k}(T),{\rm Cov}(u_{k}(T),u_{l}(T))):~k,~l\in\mathbb{N}^{*}\}.
	\end{equation*}
\end{theorem}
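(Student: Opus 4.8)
The plan is to recover the Fourier coefficients $f_k$ and $|g_k|$ one mode at a time from the given data, and then reassemble $f$ and $|g|$ in $L^2(D)$ using Parseval. For the first component, fix $k$ and evaluate \eref{equdeterf} at $t=T$: we have $\mathbb{E}(u_k(T)) = f_k \int_0^T E_{\alpha,1}(-\lambda_k^s (T-r)^\alpha) h(r)\,dr$. By Lemma \ref{lemunq1} the integral $m_k := \int_0^T E_{\alpha,1}(-\lambda_k^s (T-r)^\alpha) h(r)\,dr$ is a strictly positive (hence nonzero) constant depending only on the known data $\alpha$, $s$, $\lambda_k$, $h$, $T$. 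Therefore $f_k = \mathbb{E}(u_k(T))/m_k$ is uniquely determined, and since $\{\phi_k\}$ is an orthonormal basis of $L^2(D)$, $f = \sum_k f_k \phi_k$ is uniquely determined in $L^2(D)$.

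For the second component, fix $k$ and take $l=k$ in \eref{equdeterg} evaluated at $t=T$: ${\rm Cov}(u_k(T),u_k(T)) = g_k^2\, c_{kk}$, where $c_{kk} := \mathbb{E}\big(\int_0^T E_{\alpha,1}(-\lambda_k^s (T-r)^\alpha)\,dW^H(r)\big)^2$. By Lemma \ref{lemunq2} (with $l=k$), $c_{kk} \geq C_2 > 0$, so $c_{kk}$ is a known nonzero constant and $g_k^2 = {\rm Var}(u_k(T))/c_{kk}$ is uniquely determined; taking the nonnegative square root gives $|g_k|$ uniquely. Then $|g| = \sum_k |g_k|^2$... more precisely, $\|g\|_{L^2(D)}^2 = \sum_k g_k^2 = \sum_k {\rm Var}(u_k(T))/c_{kk}$, and in fact the function $|g|(x)$ is recovered because its squared modulus is not what the coefficients give directly — here one should be careful: the data determine $g_k^2$, i.e. $|g_k|$, for every $k$, but not the signs; the statement only claims $|g|$ up to sign, and indeed $\sum_k \varepsilon_k |g_k| \phi_k$ for any sign choice $\varepsilon_k \in \{\pm 1\}$ produces the same covariance data, so what is genuinely determined is the collection $\{|g_k|\}_k$, equivalently $g$ modulo sign of each coefficient; I would phrase the conclusion as: $g$ is determined up to the signs of its Fourier coefficients, which is what is meant by ``$|g|$''. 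The condition $\|g\|_{L^2(D)} \neq 0$ guarantees this is a genuine (nonzero) reconstruction and also that at least one $g_k \neq 0$.

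Assembling: given the data set $\{(\mathbb{E}u_k(T), {\rm Cov}(u_k(T),u_l(T))) : k,l \in \mathbb{N}^*\}$, the above shows each $f_k$ and each $|g_k|$ is a deterministic functional of the data; hence if $(f,g)$ and $(\tilde f, \tilde g)$ both generate the same data, then $f_k = \tilde f_k$ for all $k$ (so $f = \tilde f$ in $L^2(D)$) and $|g_k| = |\tilde g_k|$ for all $k$ (so $|g| = |\tilde g|$), proving uniqueness.

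The main obstacle, and the only nontrivial input, is the strict positivity of the constants $m_k$ and $c_{kk}$ — without it one only gets $f_k$ and $g_k^2$ up to division by a possibly vanishing quantity. This is exactly what Lemmas \ref{lemunq1} and \ref{lemunq2} supply, and those in turn rest on the complete monotonicity of $E_{\alpha,1}(-t)$ (Lemma \ref{lemmtal}), which forces $E_{\alpha,1}(-\lambda_k^s(T-r)^\alpha) \geq E_{\alpha,1}(-\lambda_k^s T^\alpha) > 0$ on $[0,T]$, together with the positivity of the kernels $|r_1-r_2|^{2H-2}$ (for $H>1/2$) or the fractional-integral representation (for $H<1/2$) handled case by case as in the proof of Lemma \ref{lemunq2}. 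Once positivity is in hand, the theorem is immediate, so the proof will essentially be a two-line deduction citing \eref{equdeterf}, \eref{equdeterg}, Lemma \ref{lemunq1}, and Lemma \ref{lemunq2}.
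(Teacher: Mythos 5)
Your recovery of $f$ is exactly the paper's argument and is fine. The gap is in the $g$ part: you only use the diagonal data ${\rm Cov}(u_k(T),u_k(T))$, which gives the sequence $\{|g_k|\}_k$, and you then assert that ``$\sum_k \varepsilon_k |g_k|\phi_k$ for any sign choice $\varepsilon_k\in\{\pm1\}$ produces the same covariance data.'' That assertion is false, and the conclusion you reach is weaker than what the theorem claims. The data set includes the off-diagonal covariances, and by \eref{equdeterg} together with Lemma \ref{lemunq2} (which is stated for all pairs $k,l$, not just $k=l$) one recovers every product $g_kg_l = {\rm Cov}(u_k(T),u_l(T))/c_{kl}$ with $c_{kl}>0$. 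Flipping the sign of a single coefficient $g_k$ changes $g_kg_l$ for every $l$ with $g_l\neq0$, so independent per-mode sign flips do \emph{not} leave the data invariant; only a single global sign flip does. This matters because the theorem asserts uniqueness of $|g|$, i.e.\ the pointwise absolute value $|g(x)|$, and knowledge of $\{|g_k|\}$ alone does not determine that function: for instance $\phi_1+\phi_2$ and $\phi_1-\phi_2$ have the same $\{|g_k|\}$ but different $|g(x)|$. The paper's proof avoids this by using all products $g_kg_l$, which determine $g^2(x)=\sum_{k,l}g_kg_l\phi_k(x)\phi_l(x)$, hence $|g|=\sqrt{g^2}$ (equivalently, $g$ up to one global sign).

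To repair your argument, replace the diagonal-only step by: for every pair $k,l$, Lemma \ref{lemunq2} gives $c_{kl}\geq C_2>0$, so $g_kg_l$ is uniquely determined from ${\rm Cov}(u_k(T),u_l(T))$; if $(f,g)$ and $(\tilde f,\tilde g)$ produce the same data then $g_kg_l=\tilde g_k\tilde g_l$ for all $k,l$, which forces $g^2=\tilde g^2$ in $L^1(D)$ and hence $|g|=|\tilde g|$. With that change your proof coincides with the paper's.
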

\begin{proof}
	According to \eref{equdeterf}, $f_{k}$ can be got by
	\begin{equation*}
		f_{k}=\frac{\mathbb{E}(u_{k})}{\int_{0}^{t}E_{\alpha,1}(-\lambda_{k}^{s}(t-r)^{\alpha})h(r)dr}.
	\end{equation*}
	Thus there exists $f=\sum\limits_{k=1}^{\infty}f_{k}\phi_{k}$. Further from \eref{equdeterg}, there is
	\begin{equation*}
		g_{k}g_{l}=\frac{{\rm Cov}(u_{k}(t),u_{l}(t))}{\mathbb{E}\left (\int_{0}^{t}E_{\alpha,1}(-\lambda_{k}^{s}(t-r)^{\alpha})dW^{H}(r)\int_{0}^{t}E_{\alpha,1}(-\lambda_{l}^{s}(t-r)^{\alpha})dW^{H}(r)\right )},
	\end{equation*}
	which can help us to obtain $g^{2}$.
\end{proof}
\subsection{Instability}
In this subsection, we show the instability of the inverse source problem. Different from the discussions in \cite{Feng.2020AirspfttfdedbafBm}, we show the instability of recovering $g_{k}g_{l}$. 

\begin{theorem}
	Let $f(x),g(x)\in L^{2}(D)$ and $\|g(x)\|_{L^{2}(D)}\neq0$. Then the following estimates hold
	\begin{equation*}
		\left |\int_{0}^{t}E_{\alpha,1}(-\lambda_{k}^{s}(t-r)^{\alpha})h(r)dr\right |\leq C\lambda_{k}^{-\sigma_{1} s}
	\end{equation*}
	and
	\begin{equation*}\fl
		\mathbb{E}\Bigg(\int_{0}^{t}E_{\alpha,1}(-\lambda_{k}^{s}(t-r)^{\alpha})dW^{H}(r)\int_{0}^{t}E_{\alpha,1}(-\lambda_{l}^{s}(t-r)^{\alpha})dW^{H}(r)\Bigg)\leq C\lambda_{k}^{-\sigma_{2} s}\lambda_{l}^{-\sigma_{2} s},
	\end{equation*}
	where $\sigma_{1}\in[0,1]$ and $\sigma_{2}\in[0,\min(\frac{H}{\alpha}-\epsilon,1)]$.
\end{theorem}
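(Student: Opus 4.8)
The plan is to run both estimates through the Dunford--Taylor (contour) representation of the scalar Mittag-Leffler function, in the same spirit as the use of the resolvent bound \eref{eqresoest} in the proof of the regularity lemma in Section \ref{Sec3}; in fact each quantity below is simply the ``$\phi_{k}$-component'' of an operator estimate already obtained there. For fixed $k$ write
\[
	E_{\alpha,1}(-\lambda_{k}^{s}\tau^{\alpha})=\frac{1}{2\pi\mathbf{i}}\int_{\Gamma_{\theta,\kappa}}e^{z\tau}z^{\alpha-1}(z^{\alpha}+\lambda_{k}^{s})^{-1}dz,\qquad\tau>0,
\]
and note the scalar analogue of \eref{eqresoest}, namely $|\lambda_{k}^{\sigma s}z^{\alpha-1}(z^{\alpha}+\lambda_{k}^{s})^{-1}|\leq C|z|^{\sigma\alpha-1}$ for $z\in\Sigma_{\theta}$ and $\sigma\in[0,1]$, which follows from the sectorial lower bound $|z^{\alpha}+\lambda_{k}^{s}|\geq c(|z|^{\alpha}+\lambda_{k}^{s})$ (valid because $\alpha\theta<\pi$) together with $\lambda_{k}^{\sigma s}/(|z|^{\alpha}+\lambda_{k}^{s})\leq|z|^{-\alpha(1-\sigma)}$. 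Throughout I will also use the elementary contour bound $\int_{\Gamma_{\theta,\kappa}}|e^{z\tau}||z|^{\gamma}|dz|\leq C\tau^{-\gamma-1}$ for $\tau\in(0,T]$ (up to a bounded contribution from the arc when $\gamma\leq-1$), exactly as in Section \ref{Sec3}.

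For the first estimate, insert $\lambda_{k}^{\sigma_{1}s}$ under the contour integral; since $\sigma_{1}\alpha\leq\alpha<1$ the two bounds above give $\lambda_{k}^{\sigma_{1}s}|E_{\alpha,1}(-\lambda_{k}^{s}(t-r)^{\alpha})|\leq C(t-r)^{-\sigma_{1}\alpha}$, whence
\[
	\lambda_{k}^{\sigma_{1}s}\left|\int_{0}^{t}E_{\alpha,1}(-\lambda_{k}^{s}(t-r)^{\alpha})h(r)dr\right|\leq C\|h\|_{L^{\infty}([0,T])}\int_{0}^{t}(t-r)^{-\sigma_{1}\alpha}dr\leq C.
\]
This is precisely the computation for $\vartheta_{3}$ in Section \ref{Sec3} with $f$ replaced by $\phi_{k}$.

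For the second estimate the decisive step is to control, uniformly in $H\in(0,1)$, the fBm double Wiener integral by an $L^{2}$-type quantity; this is Lemma \ref{eqcorleml} applied with $q_{k}(r)=E_{\alpha,1}(-\lambda_{k}^{s}r^{\alpha})$, which lies in $L^{2}([0,t])$ along with ${}_{0}\partial^{\frac{1-2H}{2}}_{r}q_{k}$ (the latter because $q_{k}$ is bounded and smooth on $(0,t]$ and $\frac{1-2H}{2}<\frac{1}{2}$). It yields
\[
	\mathbb{E}\Bigl(\int_{0}^{t}q_{k}(t-r)dW^{H}(r)\int_{0}^{t}q_{l}(t-r)dW^{H}(r)\Bigr)\leq C\bigl\|{}_{0}\partial^{\frac{1-2H}{2}}_{r}q_{k}\bigr\|_{L^{2}([0,t])}\bigl\|{}_{0}\partial^{\frac{1-2H}{2}}_{r}q_{l}\bigr\|_{L^{2}([0,t])},
\]
so it remains to prove $\lambda_{k}^{\sigma_{2}s}\|{}_{0}\partial^{\frac{1-2H}{2}}_{r}q_{k}\|_{L^{2}([0,t])}\leq C$. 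Differentiating term by term under the contour integral introduces the extra factor $z^{\frac{1-2H}{2}}$, and combining the resolvent bound (with $\sigma=\sigma_{2}$) with the contour bound gives $\lambda_{k}^{\sigma_{2}s}|{}_{0}\partial^{\frac{1-2H}{2}}_{r}q_{k}(r)|\leq Cr^{H-\frac{1}{2}-\sigma_{2}\alpha}$, hence
\[
	\lambda_{k}^{2\sigma_{2}s}\bigl\|{}_{0}\partial^{\frac{1-2H}{2}}_{r}q_{k}\bigr\|_{L^{2}([0,t])}^{2}\leq C\int_{0}^{t}r^{2H-1-2\sigma_{2}\alpha}dr,
\]
which is finite exactly when $2H-2\sigma_{2}\alpha>0$, i.e. $\sigma_{2}<H/\alpha$ --- this is the admissible range $\sigma_{2}\in[0,\min(H/\alpha-\epsilon,1)]$ in the statement (and this bound is the $\phi_{k}$-version of the estimate for $\vartheta_{4}$ in Section \ref{Sec3}). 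Multiplying the bounds for the indices $k$ and $l$ gives the second inequality. The main obstacle is this last estimate: one must extract the correct power $\lambda_{k}^{-\sigma_{2}s}$ from the resolvent while simultaneously absorbing the singular factor $z^{\frac{1-2H}{2}}$ produced by the fractional derivative, and then guarantee convergence of $\int_{0}^{t}r^{2H-1-2\sigma_{2}\alpha}dr$ --- this is what forces $\sigma_{2}<H/\alpha$, and it is Lemma \ref{eqcorleml}, the unified $H\in(0,1)$ treatment of the fBm integral, that makes a single proof possible. The first estimate is routine once the contour machinery is in place.
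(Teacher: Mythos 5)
Your proposal is correct and follows essentially the same route as the paper: the first bound via the contour representation and the scalar resolvent estimate \eref{eqresoest} (the $\phi_k$-component of the $\vartheta_3$ computation), and the second via Lemma \ref{eqcorleml} combined with the contour bound on ${}_{0}\partial^{\frac{1-2H}{2}}_{r}E_{\alpha,1}(-\lambda_{k}^{s}r^{\alpha})$, which produces the factor $|z|^{\frac{1-2H}{2}+\sigma_{2}\alpha-1}$ and the constraint $2H-2\sigma_{2}\alpha>0$, i.e.\ $\sigma_{2}<H/\alpha$. Your explicit verification of the hypotheses of Lemma \ref{eqcorleml} and of the sectorial bound is a harmless elaboration of steps the paper leaves implicit.
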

\begin{proof}
	The resolvent estimate and simple calculations imply
	\begin{equation*}
		\eqalign{
			&\left |\int_{0}^{t}E_{\alpha,1}(-\lambda_{k}^{s}(t-r)^{\alpha})h(r)dr\right |\\
			\leq&C\int_{0}^{t}\left |\int_{\Gamma_{\theta,\kappa}}e^{zr}z^{\alpha-1}(z^{\alpha}+\lambda_{k}^{s})^{-1}dz\right |dr\|h(r)\|_{L^{\infty}([0,T])}\\
			\leq&C\lambda_{k}^{-\sigma_{1} s}\int_{0}^{t}\left |\int_{\Gamma_{\theta,\kappa}}e^{zr}z^{\alpha-1}\lambda_{k}^{\sigma_{1} s}(z^{\alpha}+\lambda_{k}^{s})^{-1}dz\right |dr\|h(r)\|_{L^{\infty}([0,T])}\\
			\leq&C\lambda_{k}^{-\sigma_{1} s}\int_{0}^{t}\int_{\Gamma_{\theta,\kappa}}|e^{zr}||z|^{\sigma_{1}\alpha-1}|dz|dr\|h(r)\|_{L^{\infty}([0,T])}\\
			\leq&C\lambda_{k}^{-\sigma_{1} s}\int_{0}^{t}r^{-\sigma_{1}\alpha}dr,
		}
	\end{equation*}
	where we need to require $\sigma_{1}\in[0,1]$.
	According to Lemma \ref{eqcorleml} and the resolvent estimate, one can obtain
	\begin{equation*}
		\eqalign{
			&\mathbb{E}\Bigg(\int_{0}^{t}E_{\alpha,1}(-\lambda_{k}^{s}(t-r)^{\alpha})dW^{H}(r)\int_{0}^{t}E_{\alpha,1}(-\lambda_{l}^{s}(t-r)^{\alpha})dW^{H}(r)\Bigg)\\
			\leq&C\left(\int_{0}^{t}\left |{}_{0}\partial^{\frac{1-2H}{2}}_{r}E_{\alpha,1}(-\lambda_{k}^{s}r^{\alpha})\right |^{2}dr\right)^{\frac{1}{2}}\left(\int_{0}^{t}\left |{}_{0}\partial^{\frac{1-2H}{2}}_{r}E_{\alpha,1}(-\lambda_{l}^{s}r^{\alpha})\right |^{2}dr\right)^{\frac{1}{2}}\\
			\leq&C\left(\int_{0}^{t}\left |\int_{\Gamma_{\theta,\kappa}}e^{zr}z^{\frac{1-2H}{2}}z^{\alpha-1}(z^{\alpha}+\lambda_{k}^{s})^{-1}dz\right |^{2}dr\right)^{\frac{1}{2}}\\
			&\qquad \cdot\left(\int_{0}^{t}\left |\int_{\Gamma_{\theta,\kappa}}e^{zr}z^{\frac{1-2H}{2}}z^{\alpha-1}(z^{\alpha}+\lambda_{l}^{s})^{-1}dz\right |^{2}dr\right)^{\frac{1}{2}}\\
			\leq&C\lambda_{k}^{-\sigma_{2} s}\lambda_{l}^{-\sigma_{2} s}\left(\int_{0}^{t}\left |\int_{\Gamma_{\theta,\kappa}}e^{zr}z^{\frac{1-2H}{2}}z^{\alpha-1}\lambda_{k}^{-\sigma_{2} s}(z^{\alpha}+\lambda_{k}^{s})^{-1}dz\right |^{2}dr\right)^{\frac{1}{2}}\\
			&\qquad\cdot\left(\int_{0}^{t}\left |\int_{\Gamma_{\theta,\kappa}}e^{zr}z^{\frac{1-2H}{2}}z^{\alpha-1}\lambda_{l}^{-\sigma_{2} s}(z^{\alpha}+\lambda_{l}^{s})^{-1}dz\right |^{2}dr\right)^{\frac{1}{2}}\\
			\leq&C\lambda_{k}^{-\sigma_{2} s}\lambda_{l}^{-\sigma_{2} s}\int_{0}^{t}\left (\int_{\Gamma_{\theta,\kappa}}|e^{zr}||z|^{\frac{1-2H}{2}+\sigma_{2}\alpha-1}|dz|\right )^{2}dr\\
			\leq&C\lambda_{k}^{-\sigma_{2} s}\lambda_{l}^{-\sigma_{2} s}\int_{0}^{t}r^{2H-1-2\sigma_{2}\alpha}dr,
		}
	\end{equation*}
	where we need to require $2H-2\sigma_{2}\alpha>0$, i.e., $\sigma_{2}<\frac{H}{\alpha}$.
\end{proof}
\section{Numerical experiments} \label{Sec5}
In this section, we provide some examples to validate the developed theory. Here, we choose $D=(0,1)$. Thus the eigenvalues and eigenfunctions of $-\Delta$ are
\begin{equation*}
	\lambda_{k}=k^{2}\pi^{2},\quad \phi_{k}(x)=\sqrt{2}\sin(k\pi x),\quad k=1,2,\cdots.
\end{equation*}

\subsection{Numerical scheme for direct problem}
To get synthetic data for inverse problem,  we propose a numerical scheme to solve direct problem numerically.

Introduce $\mathbb{H}_{N}={\rm span}\{\phi_{1},\phi_{2},\ldots,\phi_{N}\}$ with $N\in\mathbb{N}^{*}$ and define the projection operator $P_{N}$: $L^{2}(\Omega)\rightarrow \mathbb{H}_{N}$ by
\begin{equation*}
	P_{N}u=\sum_{i=1}^{N}(u,\phi_{i})\phi_{i}\quad \forall u\in L^{2}(\Omega).
\end{equation*}
Define $A^{s}_{N}:~\mathbb{H}_{N}\rightarrow \mathbb{H}_{N}$ as
\begin{equation*}
	A^{s}_{N}v_{N}=\sum_{k=1}^{N}\lambda_{k}^{s}(v_{N},\phi_{k})\phi_{k}\quad v_{N}\in\mathbb{H}_{N}.
\end{equation*}
Let $\tau=T/L$ with $L\in\mathbb{N}^{*}$ and $t_{k}=k\tau$ ($k=0,1,2,\cdots,L$). Then the numerical solution of $u(x,t_{n})$, i.e., $u^{n}_{N}$, can be approximated by
\begin{equation*}\fl
	\frac{u^{n}_{N}-u^{n-1}_{N}}{\tau}+\sum_{i=0}^{n-1}d^{(1-\alpha)}_{i}A^{s}_{N}u^{n-i}_{N}=P_{N}f(x)h(t_{n})+P_{N}g(x)\frac{W^{H}(t_{n})-W^{H}(t_{n-1})}{\tau},
\end{equation*}
where
\begin{equation*}
	\sum_{i=0}^{\infty}d^{(\alpha)}_{i}\xi^{i}=\left(\frac{1-\xi}{\tau}\right)^{\alpha}.
\end{equation*}
Further introduce $v_{1,k}(t)$ and $v_{2,k}(t)$ as
\begin{equation*}
	\eqalign{
		&v_{1,k}(t)=\int_{0}^{t}E_{\alpha,1}(-\lambda_{k}^{s}(t-r)^{\alpha})h(r)dr,\\
		&v_{2,k}(t)=\int_{0}^{t}E_{\alpha,1}(-\lambda_{k}^{s}(t-r)^{\alpha})dW^{H}(r).
	}
\end{equation*}
It is easy to verify that $v_{1,k}(t)$ and $v_{2,k}(t)$ are the solutions of
\begin{equation*}
	\left\{
	\eqalign{
		&\partial_{t}v_{1,k}(t)+\partial^{1-\alpha}_{t} \lambda_{k}^{s}v_{1,k}(t)=h(t)\qquad t\in (0,T],\\
		&v_{1,k}(0)=0
	}
	\right.
\end{equation*}
and
\begin{equation*}
	\left\{
	\eqalign{
		&\partial_{t}v_{2,k}(t)+\partial^{1-\alpha}_{t} \lambda_{k}^{s}v_{2,k}(t)=\dot{W}^{H}(t)\qquad t\in (0,T],\\
		&v_{2,k}(0)=0.
	}
	\right.
\end{equation*}
The  $v_{1,k}(t_{n})$ and $v_{2,k}(t_{n})$ can be respectively approximated by $v_{1,k}^{n}$ and $v_{2,k}^{n}$, which are the solutions of
\begin{equation*}
	\frac{v_{1,k}^{n}-v_{1,k}^{n-1}}{\tau}+\sum_{i=0}^{n-1}d^{(1-\alpha)}_{i}\lambda_{k}^{s}v_{1,k}^{n-i}=h(t_{n})
\end{equation*}
and
\begin{equation*}
	\frac{v_{2,k}^{n}-v_{2,k}^{n-1}}{\tau}+\sum_{i=0}^{n-1}d^{(1-\alpha)}_{i}\lambda_{k}^{s}v_{2,k}^{n-i}=\frac{W^{H}(t_{n})-W^{H}(t_{n-1})}{\tau}.
\end{equation*}
\subsection{Numerical results}
In the numerical experiments, we take $T=0.5$, $N=20$, and $L=1024$. As for $f(x)$, $g(x)$, and $h(t)$, we choose
\begin{equation*}
	h(t)=t+1,\qquad f(x)=4x(1-x)(1-2x),\qquad g(x)=x(1-x)^{2}.
\end{equation*}
We generate $1000$  trajectories to approximate the $\mathbb{E}(u_{k})$ and ${\rm Cov}(u_{k},u_{l})$, which can help us to recover $f$ and $|g|$.

Here, we take $(\alpha,s,H)$ as $\{\alpha=0.4,s=0.3,H=0.2\}$, $\{\alpha=0.6,s=0.7,H=0.2\}$, $\{\alpha=0.6,s=0.3,H=0.5\}$,
$\{\alpha=0.4,s=0.7,H=0.5\}$,
$\{\alpha=0.3,s=0.4,H=0.8\}$, and
$\{\alpha=0.7,s=0.6,H=0.8\}$, respectively. The corresponding results are shown in Figures \ref{lab432}--\ref{lab768}. From these results, it can be seen that both $f$ and $|g|$ can be well recovered.
\begin{figure}
	\centering
	\subfigure[$f(x)$]{\includegraphics[width=0.45\linewidth,angle=0]{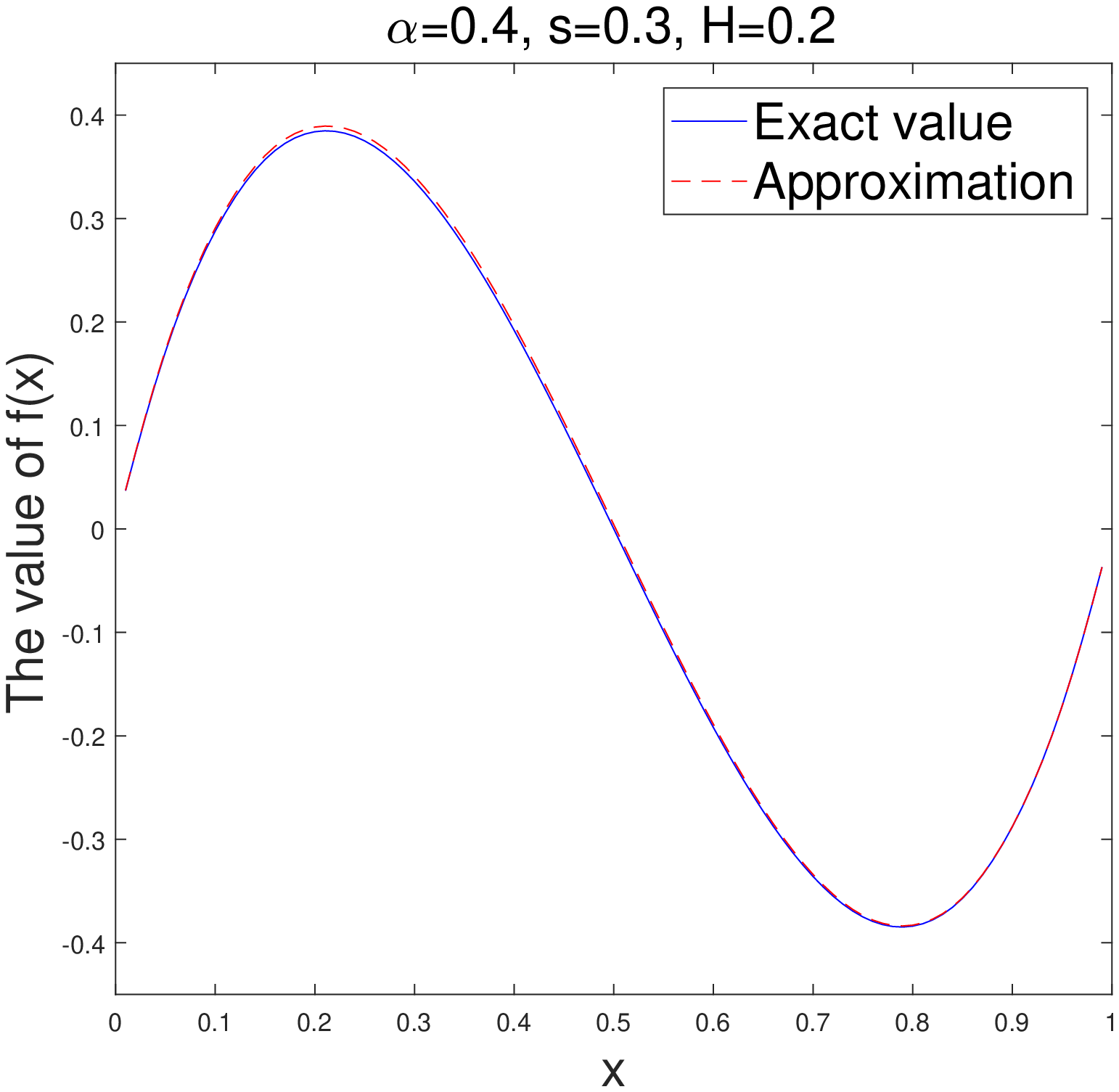}\label{lab:432f}}
	\subfigure[$|g(x)|$]{\includegraphics[width=0.45\linewidth,angle=0]{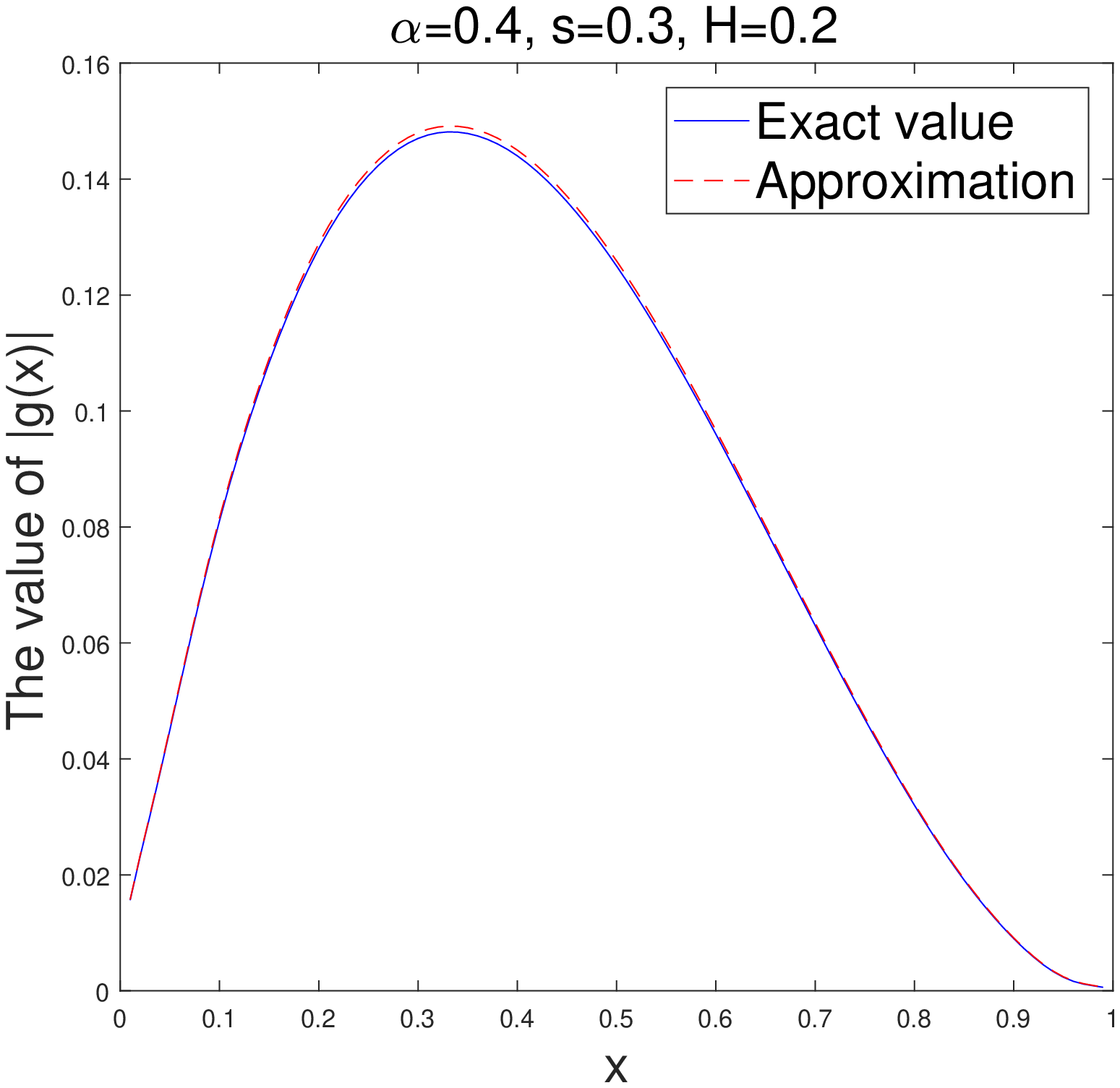}\label{lab:432g}}
	\caption{The exact and reconstructed solutions with $\alpha=0.4$, $s=0.3$, and $H=0.2$.}
	\label{lab432}
\end{figure}

\begin{figure}
	\centering
	\subfigure[$f(x)$]{\includegraphics[width=0.45\linewidth,angle=0]{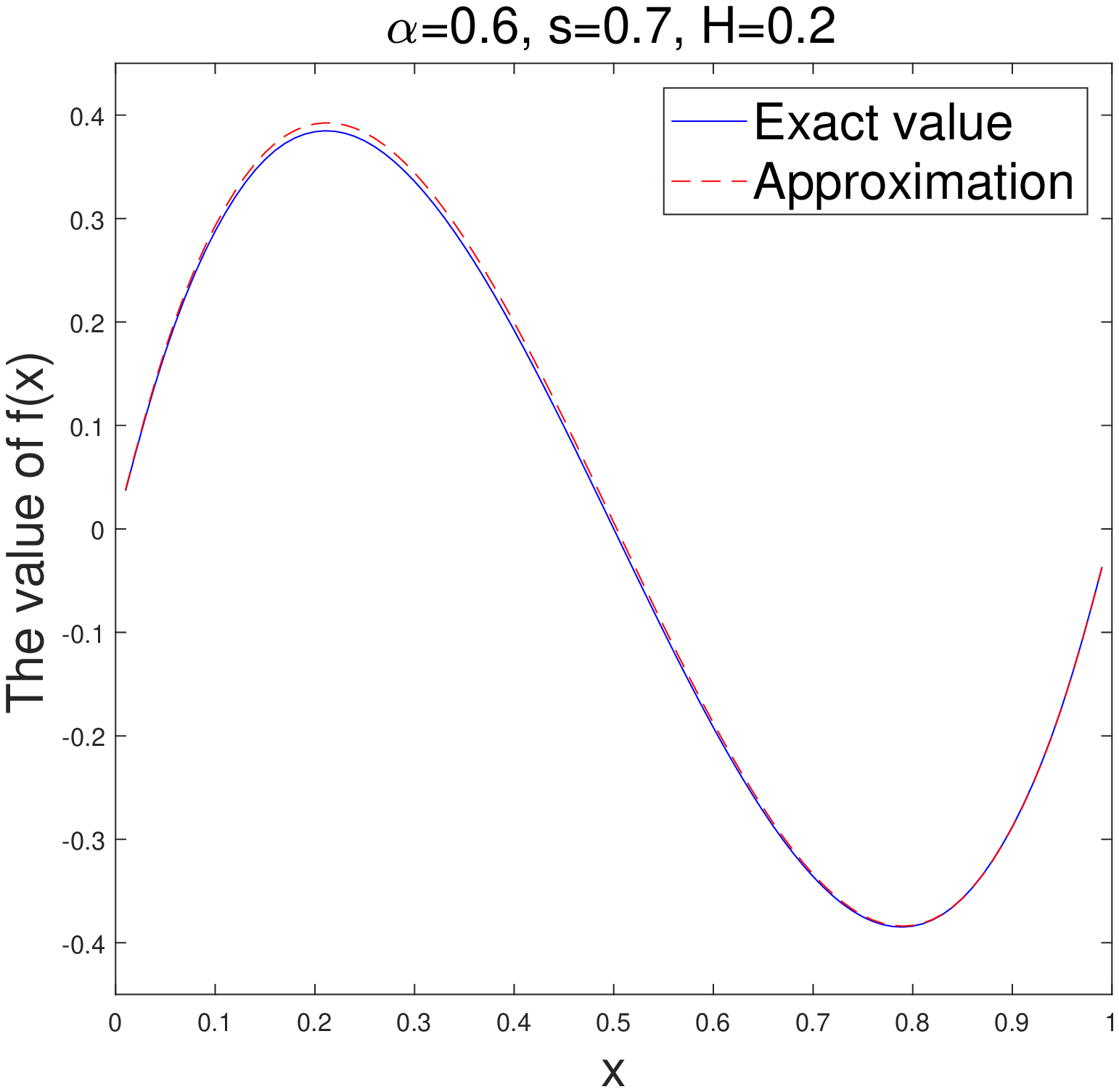}\label{lab:672f}}
	\subfigure[$|g(x)|$]{\includegraphics[width=0.45\linewidth,angle=0]{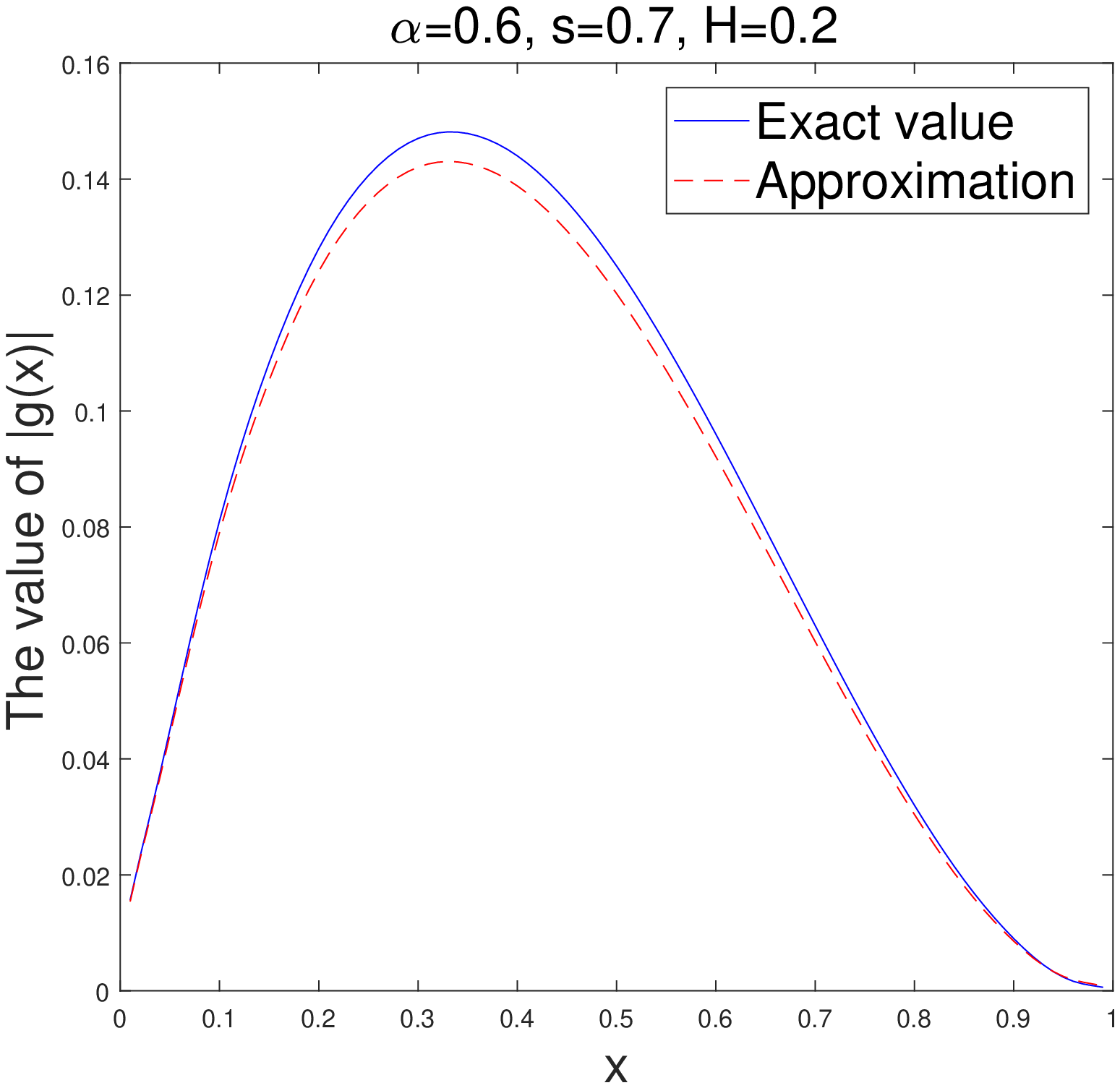}\label{lab:672g}}
	\caption{The exact and reconstructed solutions with $\alpha=0.6$, $s=0.7$ and $H=0.2$.}
	\label{lab672}
\end{figure}

\begin{figure}
	\centering
	\subfigure[$f(x)$]{\includegraphics[width=0.45\linewidth,angle=0]{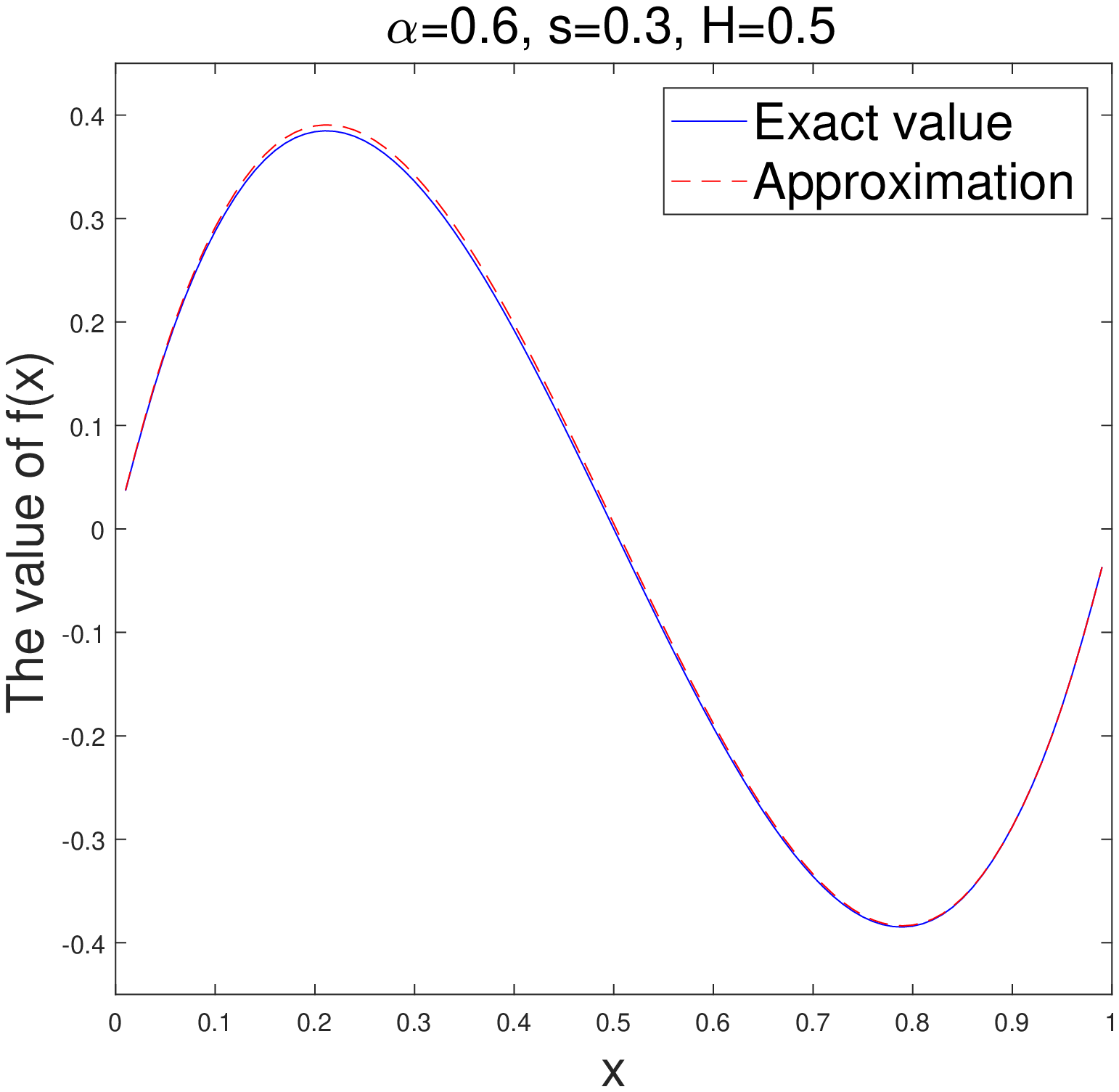}\label{lab:635f}}
	\subfigure[$|g(x)|$]{\includegraphics[width=0.45\linewidth,angle=0]{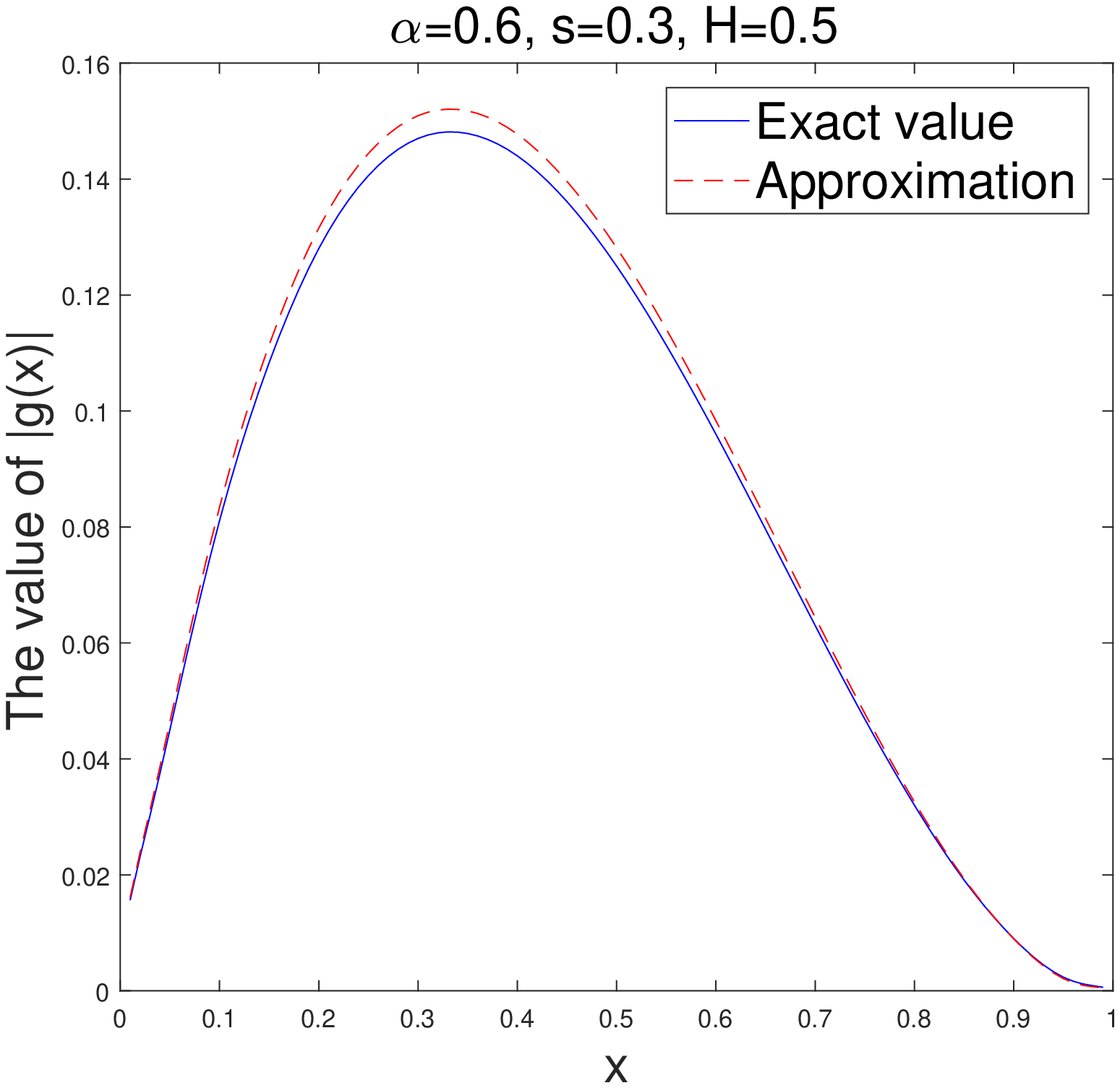}\label{lab:635g}}
	\caption{The exact and reconstructed solutions with $\alpha=0.6$, $s=0.3$, and $H=0.5$.}
	\label{lab635}
\end{figure}

\begin{figure}
	\centering
	\subfigure[$f(x)$]{\includegraphics[width=0.45\linewidth,angle=0]{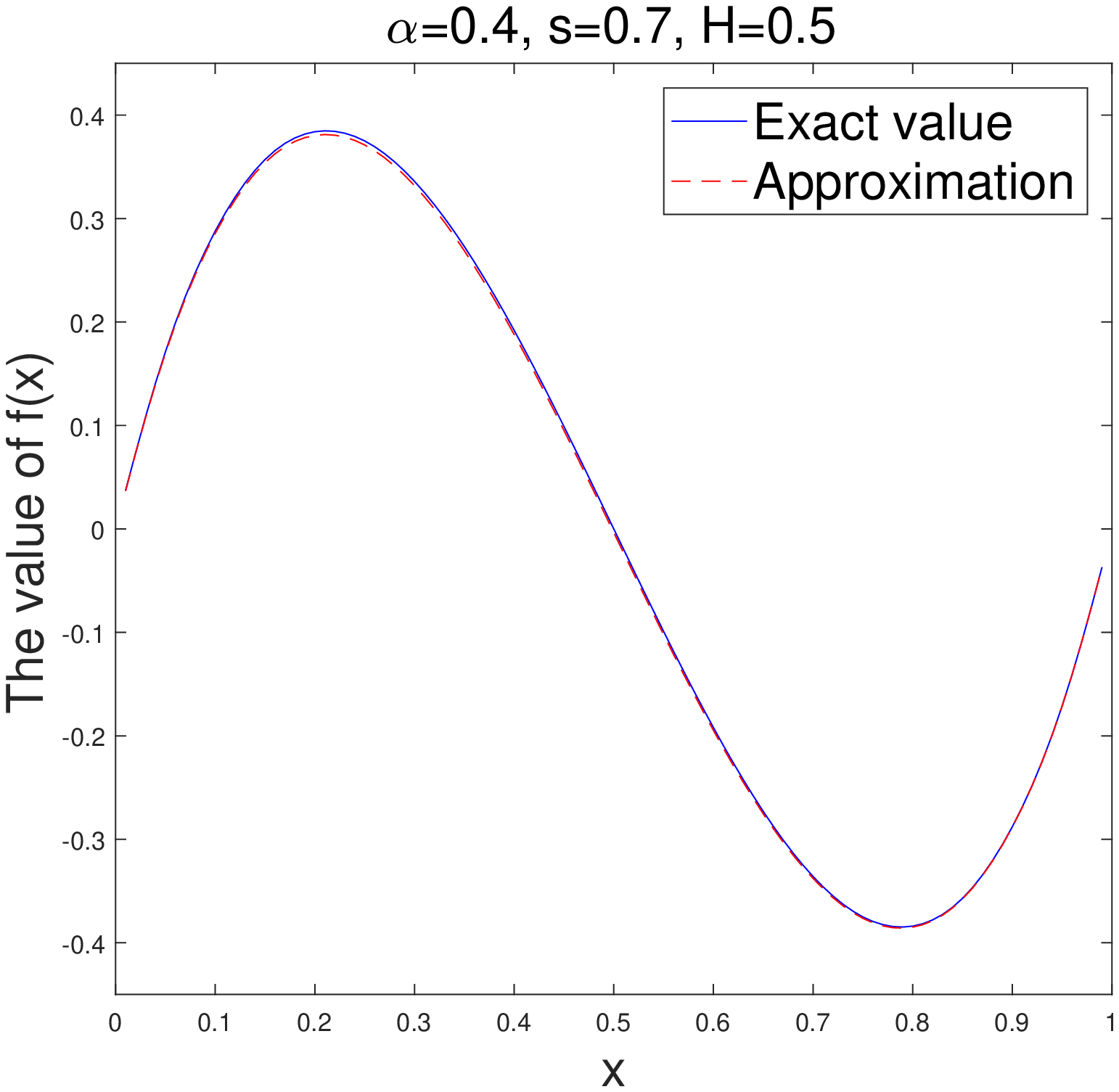}\label{lab:475f}}
	\subfigure[$|g(x)|$]{\includegraphics[width=0.45\linewidth,angle=0]{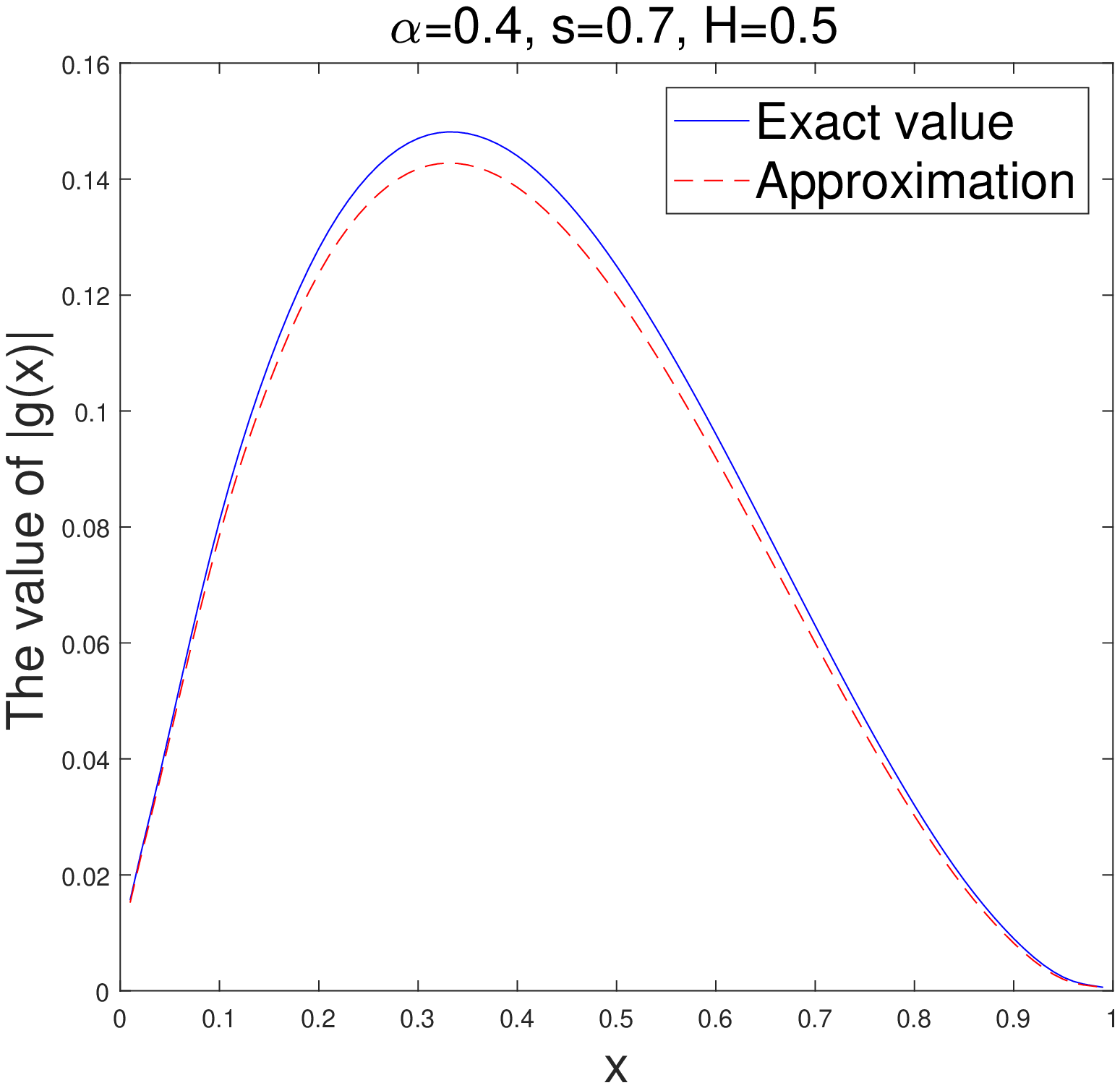}\label{lab:475g}}
	\caption{The exact and reconstructed solutions with $\alpha=0.4$, $s=0.7$, and $H=0.5$.}
	\label{lab475}
\end{figure}

\begin{figure}
	\centering
	\subfigure[$f(x)$]{\includegraphics[width=0.45\linewidth,angle=0]{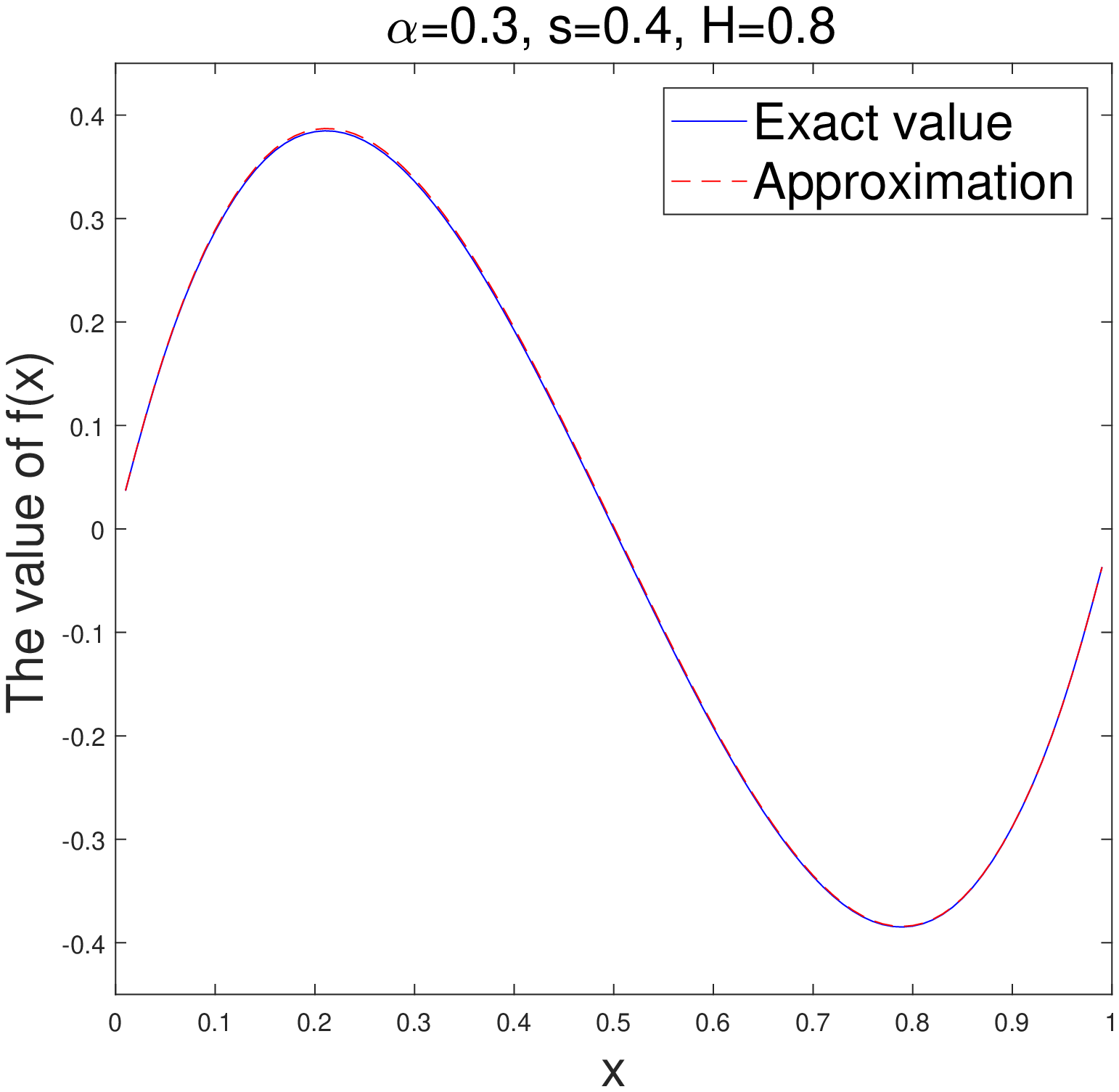}\label{lab:348f}}
	\subfigure[$|g(x)|$]{\includegraphics[width=0.45\linewidth,angle=0]{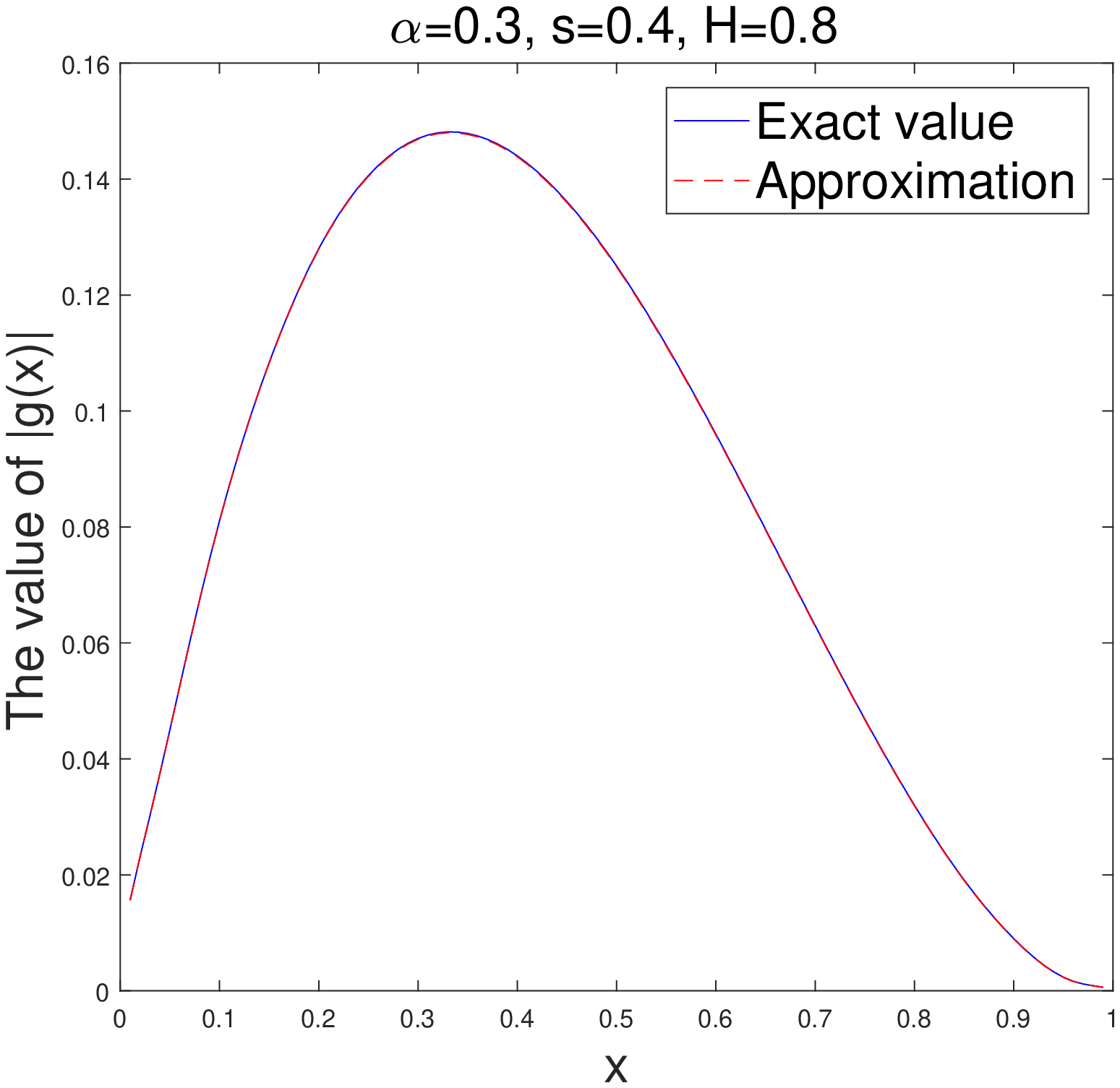}\label{lab:348g}}
	\caption{The exact and reconstructed solutions with $\alpha=0.3$, $s=0.4$, and $H=0.8$.}
	\label{lab348}
\end{figure}

\begin{figure}
	\centering
	\subfigure[$f(x)$]{\includegraphics[width=0.45\linewidth,angle=0]{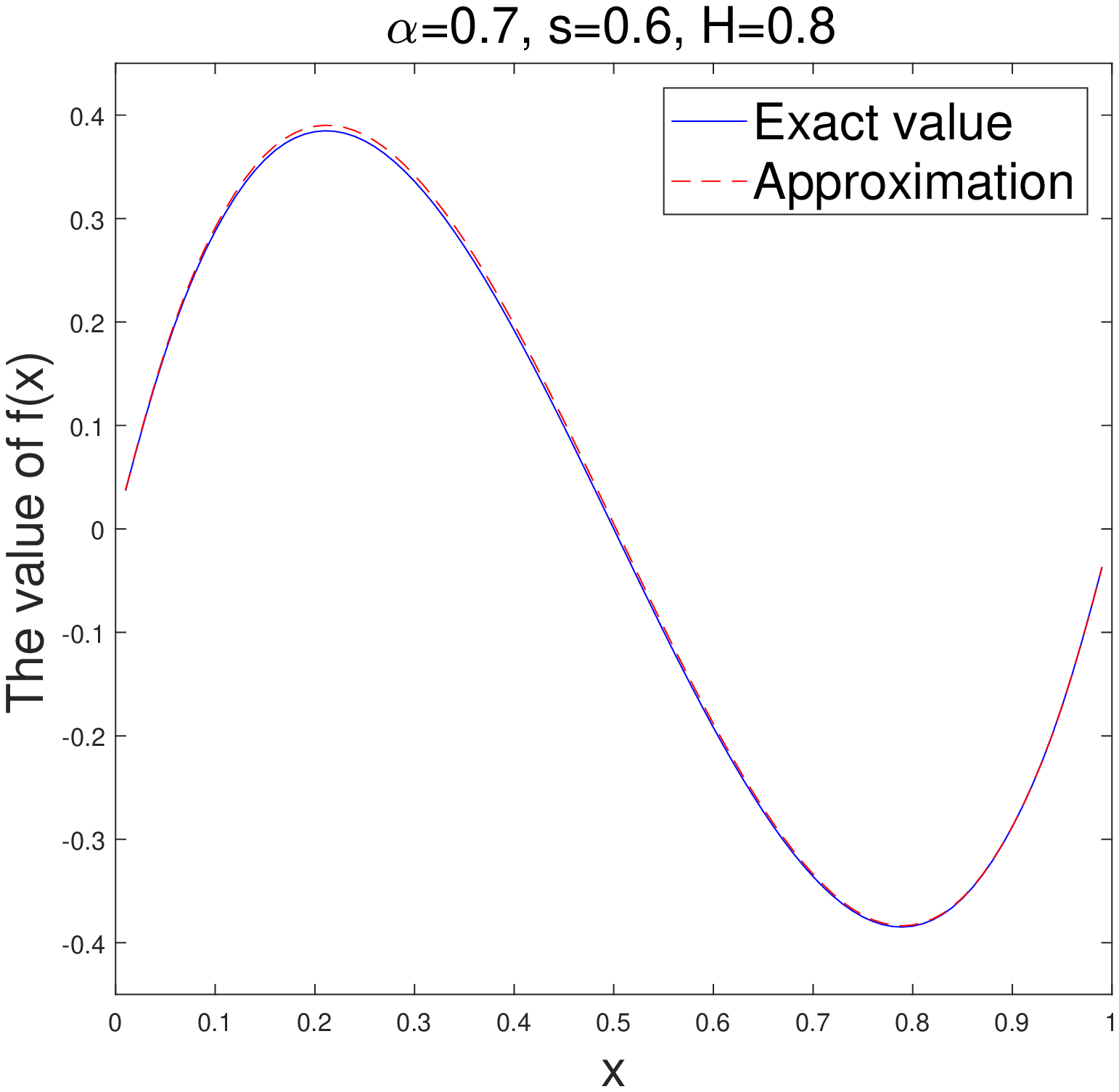}\label{lab:768f}}
	\subfigure[$|g(x)|$]{\includegraphics[width=0.45\linewidth,angle=0]{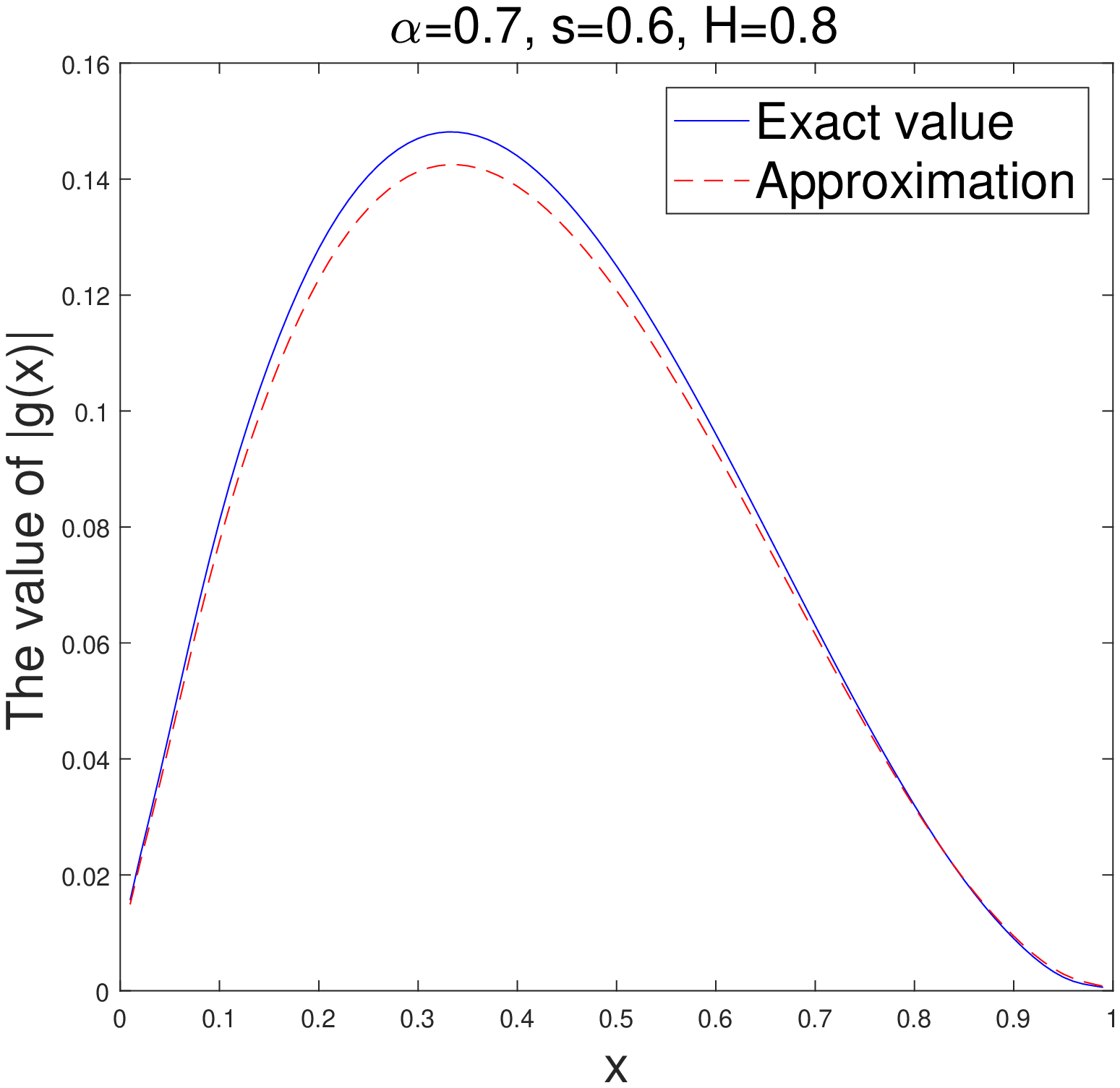}\label{lab:768g}}
	\caption{The exact and reconstructed solutions with $\alpha=0.7$, $s=0.6$, and $H=0.8$.}
	\label{lab768}
\end{figure}
\section{Conclusion}\label{Sec6}
We study the inverse random source problem for the time-space fractional diffusion equation driven by fBm with Hurst index $H\in(0,1)$. We first provide the well-posedness of the direct problem; and then show the uniqueness of the inverse problem. With the help of Lemma \ref{eqcorleml}, we give the instability of recovering $f$ and $|g|$. Numerical experiments validate the theoretical predictions.
\section*{Acknowledgment}
This work was supported by the National Natural Science Foundation of China under
Grant No. 12071195, and the AI and Big Data Funds under Grant No. 2019620005000775.
\section*{References}
\bibliographystyle{iopart-num2}

\bibliography{stoinv}

\end{document}